\newcommand{\po}{\left(}
\newcommand{\pf}{\right)}
\newcommand{\co}{\left[}
\newcommand{\cf}{\right]}
\newcommand{\cco}{\llbracket}
\newcommand{\ccf}{\rrbracket}
\newcommand{\R}{\mathbb R}
\newcommand{\N}{\mathbb N} 
\newcommand{\dd}{\text{d}}
\newcommand{\na}{\nabla}
\newcommand{\1}{\mathbbm{1}}
\newcommand{\Id}{\mathrm{Id}}
\newcommand{\bb}{\mathbf{b}}
\newcommand{\norm}[1]{\left\vvvert #1\right\vvvert}
\newtheorem{thm}{Theorem}
\newtheorem{prop}{Proposition}
\newtheorem{lem}{Lemma}
\newtheorem{assu}{Assumption}
\newtheorem{exemple}{Example}
\newtheorem{rem}{Remark}
\title{Asymptotic expansion of the invariant measure for Markov-modulated ODEs at high  frequency}
\author{Pierre Monmarché\footnote{Sorbonne Université,
4 place Jussieu 75005 Paris (France)}, Édouard Strickler\footnote{Université de Lorraine, CNRS, Inria, IECL, F-54000 Nancy (France)}}
\begin{document}

\maketitle

\begin{abstract}
 We consider time-inhomogeneous ODEs whose parameters are governed by an underlying ergodic Markov process. When this underlying process is accelerated by a factor $\varepsilon^{-1}$, an averaging phenomenon occurs and the solution of the ODE converges to a deterministic ODE as $\varepsilon$ vanishes. We are interested in cases where this averaged flow is globally attracted to a point. In that case, the equilibrium distribution of the solution of the ODE converges to a Dirac mass at this point. We prove an asymptotic expansion in terms of $\varepsilon$ for this convergence, with a somewhat explicit formula for the first order term. The results are applied in three contexts:   linear Markov-modulated ODEs, randomized splitting schemes, and   Lotka-Volterra models in random environment. In particular, as a corollary, we prove the existence of two matrices whose convex combinations are all stable but such that, for a suitable jump rate, the top Lyapunov exponent of a Markov-modulated linear ODE switching between these two matrices is positive.
\end{abstract}

\section{Introduction}

Let $\mathcal M$ be a $d$-dimensional compact $\mathcal C^\infty$ manifold. Markov-modulated ODEs on $\mathcal M$ are dynamical systems $(x(t))_{t\geqslant 0}$ solution to
\begin{equation}
    \label{eq:EDOswitch}
    x'(t) = F_{\sigma(t)} \po  x(t) \pf 
\end{equation}
where $(\sigma(t))_{t\geqslant 0}$ is a Markov process on some space $\mathcal S$ and, for all $s\in\mathcal S$, $F_s$ is a vector field on $\mathcal M$. 

We are interested in the high frequency regime, namely when $\sigma(t)$ is replaced in \eqref{eq:EDOswitch} by $\sigma(t/\varepsilon)$ for some small $\varepsilon$. Provided $\sigma$ is ergodic with respect to some probability measure $\pi$, the modulated process $x(t)$ is known to converge, as $\varepsilon$ vanishes, to the solution of the averaged ODE
\begin{equation}
    \label{eq:EDOaveraged}
x'(t) = \bar F(x(t)) \,,\quad\text{where}\quad \bar F(x) = \int_{\mathcal S} F_s(x) \pi(\dd s)\,.
\end{equation}
In the case where this averaged ODE has a unique global attractor $\bar x$, we expect the solution of \eqref{eq:EDOswitch} to be close to $\bar x$ for $t$ large and $\varepsilon$ small. In other words,   the invariant measure of the Markov process $(x(t),\sigma(t/\varepsilon))_{t\geqslant 0}$  converges, as $\varepsilon$ vanishes, to $\delta_{\bar x} \otimes \pi$. The goal of the present work is to provide an infinitesimal expansion in terms of $\varepsilon$ for this invariant measure.  This is obtained by combining a similar expansion for the law of the process for a fixed $t>0$ (following \cite{PTW2012}) with a long-time convergence result for the limit process.  The structure of the proof follows the work of Talay and Tubaro \cite{TalayTubaro} who established a similar expansion for the invariant measure of Euler-Maruyam schemes for  diffusion processes (the averaging estimates for a fixed $t>0$ being in that case replaced by  finite-time discretization error expansions).

One of our main motivation is to get the first-order term in the expansion of the top Lyapunov exponent of systems of switched linear ODE \cite{Benaim2014Stability,gurvits07,chitour2021}, as detailed in Section~\ref{sec:appli_Lyapunov}. Our result also has applications for instance in some population models \cite{MZ17,BL16,B18} or 
 random splitting numerical schemes \cite{RandomSplitting}, cf. Section~\ref{sec:applications}. More generally, Markov-modulated ODEs form a flexible class of Markov processes which appear in a variety of models, often to describe systems evolving in a randomly fluctuating environment, as in finance \cite{finance}, biology \cite{hatzikirou2021novel} or fiability \cite{fiabilite}. It is related to random ODEs, see e.g. \cite{arnold2006lyapunov,chueshov} and references within.
 
 In the specific context of Piecewise-Deterministic Markov processes (PDMP; namely, when $\sigma$ is a Markov chain on a finite set), the question of fast averaging has been addressed in~\cite{FGR09} where a large deviation principle is proven, in~\cite{PTW2012} where the expansion of the law of the process at fixed time $t$ is given, or in~\cite{benaim2019} where the convergence of the invariant measure of the Markov process $(x(t), \sigma(t/\varepsilon))_{t \geq 0}$ is proven. In the recent paper~\cite{goddard2023study}, the authors deal with the case where $\sigma$ is not ergodic.  
 
 The paper is organized as follows. In the remaining of this introduction, we introduce our general notations and assumptions. Our main result, Theorem~\ref{thm:expansion_mu}, is stated and proven in Section~\ref{sec:proof_Markov}. Section~\ref{sec:applications} is devoted to exemples of applications. Finally, an appendix gathers the proofs of some intermediary results.
 
 \bigskip

To conclude this section, let us clarify our settings.  Denote by $(\mathcal Q_t)_{t\geqslant 0}$ the semi-group associated to $(\sigma_t)_{t\geqslant 0}$, namely
 \[\mathcal Q_t f(\sigma) = \mathbb E_{\sigma}\po f(\sigma_t)\pf\,,\]
 and by $Q$ its infinitesimal generator.

Denote by $\mathcal A$ the set of bounded measurable functions from $\mathcal M\times\mathcal S$ to $\R$ which are $\mathcal C^\infty$ in their first variable $x$ and such that all their derivatives in $x$ are bounded over $\mathcal M\times\mathcal S$, and which moreover are such that for  all $x\in \mathcal M$ and all multi-index $\alpha$, $s\mapsto \partial_x^\alpha f(x,s)$ is in the domain of $Q$.
 We consider on $\mathcal A$ the norms 
\[\norm{f}_{j }  =  \sum_{|\alpha|\leqslant j }    \| \partial^\alpha_x   f \|_{\infty}\,.\]

\begin{assu}\label{assu:settings}
We assume that:
\begin{enumerate}
    \item Almost surely, the ODE \eqref{eq:EDOswitch} is well-defined for all times, with values in $ \mathcal M$. Moreover, the coordinates of $(x,s) \mapsto F_s(x) $ are in $\mathcal A$.
    \item The semigroup $(\mathcal Q_t)_{t\geqslant 0}$ admits a unique invariant probability measure $\pi$. Moreover,
     there exist $C ,\gamma  >0$ such that 
 \begin{equation}\label{eq:ergodiciteQ_t}
\|\mathcal Q_t f- \pi f \|_{\infty} \leqslant C  e^{-\gamma  t} \|f\|_{\infty }
\end{equation}
for all bounded measurable $f$ on $\mathcal S$ and all $t\geqslant  0$. 
\end{enumerate}
\end{assu}

The uniform exponential ergodicity \eqref{eq:ergodiciteQ_t} implies that the operator $Q^{-1}$ given by
\begin{equation}\label{eq:Q-1}
    Q^{-1} f := \int_0^\infty \mathcal Q_t\po \pi f -  f\pf \dd t
\end{equation}
is well-defined for all $f\in\mathcal A$, with $Q^{-1} f\in \mathcal A$ and $\norm{Q^{-1} f}_j  \leqslant C /\gamma \norm{f}_{j }$ for all $j\in\N$. Moreover, using that $\partial_t \mathcal Q_t f = Q \mathcal Q_t f = \mathcal Q_t Q f $ for all $t\geqslant 0$, we get that $QQ^{-1} f =   Q^{-1}Q f = f - \pi f$ for  $f\in\mathcal A$, namely $Q^{-1}$ is the pseudo-inverse of $Q$. 

\begin{exemple}\label{exemple:Q=pi-I}
A case of interest is given by $Qf = \pi f - f$, which corresponds to $\sigma(t) = Y_{N_t}$ where $(N_t)_{t\geqslant 0}$ is a standard Poisson process with intensity $1$, $Y_0=\sigma(0)$ and $(Y_k)_{k\geqslant 1}$ is an i.i.d. sequence of random variables distributed according to $\pi$. It is readily checked that, in that case, $\mathcal Q_t f = e^{-t} f + (1-e^{-t}) \pi f$ and then $Q^{-1} = Q$.
\end{exemple}

\begin{exemple}
\label{exemple:Q=matrix}
We will be particularly concerned with the case when $\mathcal{S}$ is a finite state space and $(\sigma_t)_{t \geq 0}$ is an irreducible continuous-time Markov chain on $\mathcal{S}$, with transition rate matrix $Q$.
In that case, $Q^{-1}$ is the \emph{group inverse} of $Q$, defined as the unique matrix $X$ solution\footnote{such a matrix exists and is unique since $Q$ has index $1$, meaning that $ Q^2$ and $Q$ have the same rank, because $0$ is a simple eigenvalue of $Q$. See Chapter 4 in \cite{BIG} for more details.} to
\begin{equation}
\label{eq:groupinverse}
QXQ= Q, \quad XQX=X, \quad XQ = QX.    
\end{equation} In the particular case when the cardinal of  $\mathcal{S}$ is 2, then $Q$ can be written as
\[
Q = \begin{pmatrix}
-p & p\\
q & -q
\end{pmatrix},
\]
for some $p, q > 0$. Moreover, one can easily check, using $Q^2 = - (p+q) Q$ that $X =  \frac{1}{(p+q)^2}Q$ satisfies \eqref{eq:groupinverse}, and therefore $Q^{-1} =  \frac{1}{(p+q)^2}Q$.
\end{exemple}

\section{High-frequency  expansion}\label{sec:proof_Markov}

The process $(X(t),\sigma(t/\varepsilon))_{t\geqslant 0}$ is a  Markov process on $\mathcal M\times\mathcal S$ with generator $L = L_c + \frac1\varepsilon Q$, where $Q$ is  seen as a an operator on functions on $\mathcal M\times \mathcal S$ acting only on the second variable and
\[
L_c f(x,s) = F_s(x) \cdot \na_x f(x,s)\,,
\]
for all $f\in\mathcal A$. For $t\geqslant 0$, denote by $P_t^\varepsilon$ the associated Markov semigroup, namely
\[P_t^\varepsilon f(x,s) = \mathbb E_{x,s} \po f(X(t),\sigma(t/\varepsilon))\pf\,.\]

We start by stating an expansion in $\varepsilon$ of $P_t^\varepsilon$ for a fixed $t$ (the proof is postponed to the Appendix). This is essentially the result (and proof)  of \cite{PTW2012} but written in a dual form and with  more explicit functional settings. Moreover,  \cite{PTW2012} only considers the case where $\mathcal S$ is a finite set, but it allows the jump rates of $\sigma$ to depend on $x$.

%\[
%\| \nabla^j f \| = \sum_{ \alpha \in \cco 1, d \ccf^j} \| \partial_{\alpha} f \|_{\infty}
%\]

%\[
%\| \partial_{\alpha} f \|_{\infty} = \max_{(x,i)} | \partial_{\alpha} f_i(x) |
%\]
\begin{prop}
\label{prop:dev-semigroup}
There exist two families of operators $P_t^{(k)},S_t^{(k)}$ indexed by $t\geq 0,k\in\N$, acting on $\mathcal A$, with the following properties.
\begin{itemize}
\item    For all $k,j  \in\N$ and $T>0$, there exists $C>0$ such that for all $f\in\mathcal A$,
\begin{equation}
\label{eq:boundsP}
\sup_{t\in[0,T]}\norm{ P_{t}^{(k)} f}_{j } \leq C \norm{f}_{j+2k }\,.
\end{equation}
    \item For all $k,j  \in\N$, there exist $C,\gamma>0$ such that for all $t\geq 0$ and all $f\in\mathcal A$,
    \begin{equation}
    \label{eq:boundsS}
\norm{ S_{t}^{(k)} f}_{j }  \leq C e^{- \gamma t}\norm{f}_{j+2k  }\,.
\end{equation}
    \item For all $n\in\N$, $T\geq 0$, $j\in\N$, there exists $C>0$ such that for all $\varepsilon >0$ and $f\in\mathcal A$,  the remainder $R_{n,t}^\varepsilon f$ defined by
    \begin{equation}
\label{eq:dev-pt}
R_{n,t}^\varepsilon f = P_t^{\varepsilon}f -\sum_{k=0}^n \varepsilon^k P_t^{(k)}f -  \sum_{k=0}^n \varepsilon^k S_{\frac{t}{\varepsilon}}^{(k)} f,
\end{equation}
satisfies
\begin{equation}
\label{eq:boundsR}
\norm{ R_{n,t}^{\varepsilon}f }_j \leq C \varepsilon^{n+1}\norm{f}_{j+3+2n}\,.
\end{equation}
\item The first operators are given by
\begin{equation}\label{eq:Pt0St0}
P_t^{(0)} f(x,s) = \pi f(\bar{\varphi}_t(x)), \quad S_{t}^{(0)}f(x,s) =   \mathcal Q_t f(x,s) - \pi f(x) 
\end{equation}
and
\begin{equation}
P_t^{(1)}f(x,s) = \bb_1(t,x,s) + \int_0^{+ \infty} \pi (L_c S_r^{(0)} f)(x) dr + \int_0^t \pi( L_c \bb_1)(r,  \bar{\varphi}_{t-r}(x)) dr,
    \label{eq:Pt1}
\end{equation}
where
\begin{equation}\label{eq:bb1}
   \bb_1 (t, x,s) = Q^{-1} ( \partial_t P_t^{(0)}f - L_c P_t^{(0)}f) (x,s). 
\end{equation}

\end{itemize}
\end{prop}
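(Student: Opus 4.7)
The strategy is a two-scale asymptotic expansion in the spirit of \cite{PTW2012}, carried out dually at the semigroup level. Setting $u^\varepsilon(t,x,s) := P_t^\varepsilon f(x,s)$, one has the backward Kolmogorov equation
\[
\partial_t u^\varepsilon = L_c u^\varepsilon + \frac{1}{\varepsilon}Q u^\varepsilon, \qquad u^\varepsilon(0,x,s) = f(x,s).
\]
The natural ansatz separates a slow time $t$ from a fast time $\tau = t/\varepsilon$:
\[
u^\varepsilon(t,x,s) \;\approx\; \sum_{k=0}^n \varepsilon^k u_k(t,x,s) \;+\; \sum_{k=0}^n \varepsilon^k v_k(t/\varepsilon, x, s),
\]
with $v_k(\tau,\cdot)\to 0$ as $\tau\to\infty$. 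The slow profiles $u_k$ will yield $P_t^{(k)}f$, and the fast correctors $v_k$ will yield $S_{t/\varepsilon}^{(k)}f$.

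Substituting and matching powers of $\varepsilon$ on each scale produces two coupled hierarchies. The slow cascade reads $Q u_{k+1} = \partial_t u_k - L_c u_k$; the right-hand side is $\pi$-mean-zero by construction of $\pi u_k$ at the previous step, so the pseudo-inverse $Q^{-1}$ from \eqref{eq:Q-1} recovers the $s$-dependent part of $u_{k+1}$, while $\pi u_{k+1}$ is determined at the following order by the inhomogeneous transport equation $\partial_t \pi u_{k+1} = \pi L_c u_{k+1}$ along the characteristics of $\bar F$. The fast cascade reads $\partial_\tau v_k = Q v_k + L_c v_{k-1}$; the requirement $v_k(\tau)\to 0$ forces $\pi v_k(0,x) = -\int_0^\infty \pi(L_c v_{k-1})(r,x)\,dr$, after which \eqref{eq:ergodiciteQ_t} guarantees exponential decay of $v_k$. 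The initial-layer matching $u_k(0,\cdot) + v_k(0,\cdot) = f\cdot\delta_{k,0}$ then closes the construction by fixing $\pi u_k(0,\cdot)$.

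At $k=0$ this recovers immediately $u_0(t,x) = \pi f(\bar\varphi_t(x))$ and $v_0(\tau,x,s) = \mathcal Q_\tau f(x,s) - \pi f(x)$, that is, \eqref{eq:Pt0St0}. At $k=1$ the $s$-dependent part of $u_1$ is exactly $\bb_1$ as in \eqref{eq:bb1}, while the boundary-layer condition forces $\pi u_1(0,x) = \int_0^\infty \pi(L_c S_r^{(0)}f)(x)\,dr$; solving by Duhamel along $\bar\varphi_t$ then produces the remaining terms of \eqref{eq:Pt1}. The norm bounds \eqref{eq:boundsP} and \eqref{eq:boundsS} follow by induction: $Q^{-1}$ preserves every $\norm{\cdot}_j$ by Assumption~\ref{assu:settings}, each $L_c$ costs one $x$-derivative, and propagation along $\bar\varphi_t$ or along $\mathcal Q_\tau$ (in the latter case exponentially damped thanks to the mean-zero structure) preserves $\norm{\cdot}_j$; careful accounting of the cascade yields the stated loss of $2k$ derivatives at level $k$.

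For the remainder the strategy à la \cite{TalayTubaro} is to use the ansatz truncated one order higher, at $n+1$. By the cascade, the residual $(\partial_t - L_c - \varepsilon^{-1}Q)\tilde u^\varepsilon_{n+1}$ collapses to $\varepsilon^{n+1}(\partial_t u_{n+1} - L_c u_{n+1} - L_c v_{n+1})$, which costs one additional $x$-derivative over $u_{n+1}$ and $v_{n+1}$, hence at most $2n+3$ derivatives of $f$. Duhamel's formula expresses $R_{n+1,t}^\varepsilon f$ as an integral of $P_{t-s}^\varepsilon$ against this residual, and since $R_{n,t}^\varepsilon f = R_{n+1,t}^\varepsilon f + \varepsilon^{n+1}(P_t^{(n+1)}f + S_{t/\varepsilon}^{(n+1)}f)$, the bound \eqref{eq:boundsR} follows. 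The real difficulty is the bookkeeping: propagating the exact derivative count through every level of the cascade, preserving uniformity in $\varepsilon\in(0,1]$ and $t\in[0,T]$, and verifying that the slow and fast scales decouple cleanly at every level — a decoupling that rests crucially on the \emph{uniform} exponential ergodicity \eqref{eq:ergodiciteQ_t}.
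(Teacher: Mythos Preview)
Your outline matches the paper's proof closely: the same two-scale ansatz, the same slow/fast cascades $Q u_{k+1} = \partial_t u_k - L_c u_k$ and $\partial_\tau v_k = Q v_k + L_c v_{k-1}$, the same closure via initial-layer matching and the decay condition on $v_k$, and the same trick $R_{n,t}^\varepsilon = R_{n+1,t}^\varepsilon + \varepsilon^{n+1}(P_t^{(n+1)} + S_{t/\varepsilon}^{(n+1)})$ to upgrade from $\varepsilon^n$ to $\varepsilon^{n+1}$.

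There is one genuine gap in your remainder argument. Duhamel gives
\[
R_{n+1,t}^\varepsilon f \;=\; -\int_0^t P_{t-r}^\varepsilon\,\eta_r\,dr,
\]
and since $P_t^\varepsilon$ is a Markov semigroup this immediately yields the $\|\cdot\|_\infty$ bound. But \eqref{eq:boundsR} is a $\norm{\cdot}_j$ bound for every $j$, and you have not explained why $P_{t-r}^\varepsilon$ is bounded on $\norm{\cdot}_j$ \emph{uniformly in $\varepsilon$}. This is the one place where uniformity in $\varepsilon$ is not automatic: the generator $L_\varepsilon = L_c + \varepsilon^{-1}Q$ blows up, so a naive Gr\"onwall on the differentiated equation would produce constants that explode. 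The paper handles this by differentiating the equation $\partial_t R = L_\varepsilon R + \eta_t$ in $x$: the singular $\varepsilon^{-1}Q$ term commutes with $\partial_x^\alpha$, so the vector $G_t$ of all $x$-derivatives of order $j$ satisfies $\partial_t G_t = L_\varepsilon G_t + B'G_t + \eta_t'$ with $B'$ bounded \emph{independently of $\varepsilon$}, and a matrix-valued Feynman--Kac representation (Lemma~\ref{lem:FeynmanKac}) then gives the uniform $\norm{\cdot}_j$ bound by induction on $j$. Equivalently, one could first prove that $P_t^\varepsilon$ itself is uniformly bounded on $\norm{\cdot}_j$ by the same mechanism; either way, this step is where the real work for \eqref{eq:boundsR} lies, and your sketch passes over it.
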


Next, to address the question of the long-time behaviour of the process, we work under the following condition: 

\begin{assu}\label{assu:fast}
There is a globally attractive $\bar x\in\mathcal M$ for  the averaged flow $\bar F$ given in \eqref{eq:EDOaveraged}, and for all $j\in\N$, there exist $C,a>0$ such that for all $t\geqslant 0$ and $f\in\mathcal C^\infty(\mathcal M)$,
\[\norm{ f \circ \varphi_t - f \po \bar x\pf}_j  \leqslant C e^{-at}\norm{f}_{j+1}\,,\]
where $\varphi$ is the flow associated to $\bar F$.
\end{assu}

Under Assumption~\ref{assu:fast}, $\mu_0=\delta_{\bar x}\otimes\pi$ is the unique invariant measure of $P_t^{(0)}$ given in \eqref{eq:Pt0St0}. Our main result is the following expansion in terms of $\varepsilon$ of any invariant measure of the process $(P_t^\varepsilon)_{t\geqslant 0}$, in the spirit of Talay-Tubaro expansions in terms of the step size for discretization schemes \cite{TalayTubaro}.

\begin{thm}
\label{thm:expansion_mu}
Under Assumptions~\ref{assu:settings} and \ref{assu:fast}, for all $f\in\mathcal A$ there exists real sequences $(c_k)_{k\geq 1} $ and $(M_k)_{k\geq 1} $ such that for all $n\in\N$ and any invariant measure $\mu_\varepsilon$ of  $(P_t^\varepsilon)_{t\geqslant 0}$,
\begin{equation}\label{eq:expansion}
   \left| \mu_\varepsilon f - \mu_0 f - \sum_{k=1}^n c_k \varepsilon^k \right| \leqslant M_n \varepsilon^{n+1}. 
\end{equation}
Moreover, 
\begin{equation}\label{eq:c1}
  c_1 = \pi L_c Q^{-1} \po       L_c   h - f  \pf (\bar x)   
\end{equation}
where, for $(x,s)\in\mathcal M\times \mathcal S$, 
\begin{equation}\label{eq:def_h}
h(x,s) = \int_0^\infty \pi \po f(\bar x) - f\po \varphi_r(x)\pf\pf dr \,.    
\end{equation}
\end{thm}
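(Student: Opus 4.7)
The plan is to adapt the Poisson-equation method of \cite{TalayTubaro} to our two-timescale generator $L = L_c + \varepsilon^{-1} Q$. I seek $\varepsilon$-independent functions $\phi_k \in \mathcal A$ and real numbers $c_k$ such that, for every $N \in \N$, the truncation $\phi_{\varepsilon,N} := \sum_{k=0}^N \varepsilon^k \phi_k$ satisfies
\[
L \phi_{\varepsilon,N} \;=\; f - \sum_{k=0}^{N-1} \varepsilon^k c_k + \varepsilon^N L_c \phi_N.
\]
Applying the invariant measure $\mu_\varepsilon$ kills $L$-coboundaries of elements of $\mathcal A$, so $\mu_\varepsilon f = \sum_{k=0}^{N-1} \varepsilon^k c_k - \varepsilon^N \mu_\varepsilon L_c \phi_N$; choosing $N = n+1$ then gives \eqref{eq:expansion} as soon as $\|L_c \phi_N\|_\infty$ is controlled.

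The functions $\phi_k$ and constants $c_k$ are built by matching powers of $\varepsilon$ in $L\phi_\varepsilon$. Order $\varepsilon^{-1}$ yields $Q\phi_0 = 0$, forcing $\phi_0$ to depend on $x$ alone. At order $\varepsilon^0$, the equation $L_c \phi_0 + Q\phi_1 = f - c_0$ averaged against $\pi$ (using $\pi Q = 0$) becomes the Poisson equation $\bar F \cdot \nabla \phi_0 = \pi f - c_0$ on $\mathcal M$. Setting $c_0 := \pi f(\bar x) = \mu_0 f$ makes its right-hand side vanish at the attractor, so Assumption~\ref{assu:fast} yields the smooth bounded solution $\phi_0 = h$ given by \eqref{eq:def_h}. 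The $s$-fluctuation of $\phi_1$ is then fixed by $\phi_1 - \pi\phi_1 = Q^{-1}(f - L_c h)$, using the bound $\norm{Q^{-1} g}_j \leq (C/\gamma)\norm{g}_j$ recalled after \eqref{eq:Q-1}.

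At every subsequent order $\varepsilon^k$ with $k\geq 1$, the equation $L_c \phi_k + Q\phi_{k+1} = -c_k$ averaged against $\pi$ becomes $\bar F \cdot \nabla(\pi\phi_k) = -c_k - \pi L_c(\phi_k - \pi\phi_k)$. The fluctuating part $\phi_k - \pi\phi_k$ is already determined by the previous level's $Q^{-1}$-equation, so defining $c_k := -\pi L_c(\phi_k - \pi\phi_k)(\bar x)$ ensures the right-hand side vanishes at $\bar x$; Assumption~\ref{assu:fast} then provides $\pi\phi_k$, and the recursion closes via $\phi_{k+1} - \pi\phi_{k+1} = -Q^{-1} L_c \phi_k$. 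For $k=1$ this recovers $c_1 = -\pi L_c Q^{-1}(f - L_c h)(\bar x) = \pi L_c Q^{-1}(L_c h - f)(\bar x)$, matching \eqref{eq:c1}.

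The main obstacle is controlling the regularity of the iterates. Each application of $Q^{-1}$ preserves the $\norm{\cdot}_j$-norm, while each Poisson-equation solve of the form $\bar F \cdot \nabla \psi = g$ with $g(\bar x) = 0$ produces $\psi(x) = -\int_0^\infty g(\bar\varphi_r(x))\,dr$, whose $\norm{\cdot}_j$-norm is controlled by $\norm{g}_{j+1}$ via Assumption~\ref{assu:fast}. Each iteration therefore costs only finitely many derivatives, so $\phi_N \in \mathcal A$ with norms bounded in terms of some higher norm of $f$, and in particular $\norm{L_c \phi_N}_\infty < \infty$, making the remainder $\varepsilon^N \mu_\varepsilon L_c \phi_N = O(\varepsilon^N)$ uniform in $\varepsilon$.
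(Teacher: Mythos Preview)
Your argument is correct and takes a genuinely different route from the paper's proof.

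The paper does not work at the generator level. Instead it fixes an auxiliary time $t>0$ and leans on the finite-time semigroup expansion of Proposition~\ref{prop:dev-semigroup}. A discrete Poisson equation $(P_t^{(0)}-\Id)\Psi_t = f-\mu_0 f$ is solved by $\Psi_t=\sum_{n\ge 0}(P_t^{(0)})^n(\mu_0 f-f)$, giving $\mu_\varepsilon f-\mu_0 f=\mu_\varepsilon(P_t^{(0)}-P_t^\varepsilon)\Psi_t$; the expansion~\eqref{eq:dev-pt} is then inserted and a bootstrap (successively replacing $f$ by $P_t^{(k)}\Psi_t$) produces the series. The explicit formula~\eqref{eq:c1} is recovered only in a second step, by letting $t\to 0$ in $c_1=-\mu_0 P_t^{(1)}\Psi_t$ and observing $t\Psi_t\to h$.

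Your approach is more direct: you bypass Proposition~\ref{prop:dev-semigroup} entirely, build the corrector hierarchy $(\phi_k)$ straight from the cascade $L_c\phi_k+Q\phi_{k+1}=-c_k$, and the formula for $c_1$ drops out immediately as the solvability condition at order $\varepsilon^1$. This is cleaner for the purpose of Theorem~\ref{thm:expansion_mu} alone. The paper's route, by contrast, separates the finite-time averaging estimate from the long-time argument, and Proposition~\ref{prop:dev-semigroup} is reused independently (for the Richardson extrapolation in the applications section). One point you should make explicit: the identity $\mu_\varepsilon(L\phi_{\varepsilon,N})=0$ requires $\phi_{\varepsilon,N}$ to lie in the domain of the full generator $L_c+\varepsilon^{-1}Q$. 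This is harmless here because each $Q\phi_k$ is computed explicitly in the recursion (e.g.\ $Q\phi_1=f-c_0-L_c h$) and is bounded, so Dynkin's formula applies; but a sentence justifying it would close the argument.
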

\begin{rem}
\label{rem:dim1}
In dimension 1, it is possible to get an alternative expression for the function $h$ appearing in formula~\eqref{eq:c1}. Indeed, for fixed $x \in \mathbb{R}$, with the change of variables $y = \varphi_r(x)$; we obtain, due to $\frac{d \varphi_r(x)}{dr} = \bar F ( \varphi_r(x))$,
\[
h(x,s) = \int_0^{+ \infty} \po \pi f(\bar x) - \pi f( \varphi_r(x)) \pf d r = \int_{\bar x}^x \frac{\pi f(y) - \pi f(\bar x)}{\bar F(y)}dy.
\]
We will use this expression in Section~\ref{subsec:LV}
\end{rem}
\begin{proof}[Proof of Theorem \ref{thm:expansion_mu}] The proof is divided in two parts. In the first one, we prove \eqref{eq:expansion}, and we give an expression of $c_1$ which depends on an arbitrary time $t>0$. In the second part, we obtain the announced expression of $c_1$ by letting $t$ vanish in this first expression.

\textbf{Step 1.}  Fix $t>0$, $f \in\mathcal A$ and let $\mu_\varepsilon$ be an invariant measure of $(P_s^\varepsilon)_{s\geqslant 0}$. By definition of the flow $\varphi_t$,
\[(P_t^{(0)})^n f(x,i) = \pi f\po \varphi_t^{\circ n}(x) \pf = \pi f\po \varphi_{tn}(x) \pf.\]
Hence,
\[
\norm{(P_t^{(0)})^n ( \mu_0 f - f)}_j = \norm{\pi f(\bar x) - \pi( f \circ  \varphi_{tn} )}_j \leq C e^{-at n} \norm{f}_{j+1}.
\]
As a consequence, the function $\Psi_t$ given by 
\begin{equation}
\label{eq:Psit}
\Psi_t = \sum_{n=0}^{\infty} (P_t^{(0)})^n (\mu_0 f -  f)   
\end{equation}
is well-defined, in $\mathcal A$ and such that, for all $j\geq 0$, $\norm{\Psi_t}_j \leq C_j\norm{f}_{j+1}$ for some $C_j>0$ independent from $f$. Moreover, 
\[
(P_t^{(0)} - \Id) \Psi_t  = \sum_{n=1}^{\infty} (P_t^{(0)})^n (\mu_0 f -  f) - \sum_{n=0}^{\infty} (P_t^{(0)})^n (\mu_0 f -  f) = f - \mu_0 f.
\]
Then, using this Poisson equation and that $\mu_\varepsilon$ is invariant for $P_t^\varepsilon$,
\[
\mu_\varepsilon f - \mu_0 f = \mu_\varepsilon (P_t^{(0)} - \Id) \Psi_t  = \mu_\varepsilon (P_t^{(0)} - P_t^\varepsilon) \Psi_t.
\]
To get the convergence of $\mu_\varepsilon f$ to $\mu_0 f$ at zeroth order, we simply bound, thanks to Proposition \ref{prop:dev-semigroup},
\[|\mu_\varepsilon f - \mu_0 f| \leqslant \|(P_t^{(0)} - P_t^\varepsilon) \Psi_t\|_\infty \ \leqslant C \varepsilon    \norm{\Psi_t}_3  \,.%  \underset{\varepsilon\rightarrow 0}\longrightarrow 0.
\]
To get the higher order expansion, write
\begin{align}
\mu_\varepsilon f - \mu_0 f &= \mu_\varepsilon (P_t^{(0)}  - P_t^{\varepsilon}) \Psi_t \nonumber\\
 &= -\mu_\varepsilon \po \sum_{k=1}^n \varepsilon^k P_t^{(k)} +  \sum_{k=0}^n \varepsilon^k S_{\frac{t}{\varepsilon}}^{(k)} + R_{n,t}^{\varepsilon}\pf \Psi_t.\label{eq:mueps-mu0}
\end{align}
From Proposition \ref{prop:dev-semigroup} and the bounds on $\norm{\Psi_t}_j$ for all $j\in\N$,
\[\left|\mu_\varepsilon \po \sum_{k=0}^n \varepsilon^k S_{\frac{t}{\varepsilon}}^{(k)} + R_{n,t}^{\varepsilon}\pf \Psi_t \right| \leqslant C_n \varepsilon^{n+1}\]%=\underset{\varepsilon\rightarrow0}{o}(\varepsilon^n).\]
for some $C_n$. Using that, from the convergence at order $0$,
\begin{equation*}
%\label{eq:muepsPtkPsi}
\left| \mu_\varepsilon P_t^{(k)} \Psi_t -  \mu_0 P_t^{(k)} \Psi_t\right| \leqslant C_{0,k} \varepsilon 
\end{equation*}
for all $k\in\N$ for some $C_{0,k}>0$,  we get first with \eqref{eq:mueps-mu0} at order $n=1$ that 
\begin{equation}\label{eq:c1Psit}
    \left| \mu_\varepsilon f - \mu_0 f + \varepsilon \mu_0 P_t^{(1)} \Psi_t \right|\leqslant C_{1,0} \varepsilon^2
\end{equation}
% \underset{\varepsilon\rightarrow0}{o}(\varepsilon).\]
for some $C_{1,0}>0$. We can thus apply this first order expansion with $f$ replaced by $P_t^{(k)} \Psi_t$ to get
\[\left |\mu_\varepsilon P_t^{(k)} \Psi_t - c_{0,k}-c_{1,k}\varepsilon\right| \leqslant C_{1,k} \varepsilon^2\]% + \underset{\varepsilon\rightarrow0}{o}(\varepsilon)\]
for some  $c_{0,k},c_{1,k},C_{1,k}$. Hence, considering the expansion \eqref{eq:mueps-mu0} at order $n=2$, we get
\[\left| \mu_\varepsilon f - \mu_0 f - \varepsilon c_{0,1} - \varepsilon^2 c_{1,1} - \varepsilon^2 c_{0,2} \right| \leqslant C_{2,0} \varepsilon^{3}\,,\]% \underset{\varepsilon\rightarrow0}{o}(\varepsilon^2).\]
for some $C_{2,0}>0$. Again, this can be used with $f$ replaced by $P_t^{(k)}\Psi_t$, and a straightforward induction on $n$ concludes the proof of \eqref{eq:expansion}. In particular, from \eqref{eq:c1Psit}, we see that   for all $t>0$, $c_1$ can be written as
\begin{equation*}%\label{eq:c1_v1}
c_1 = -\mu_0 P_t^{(1)} \Psi_t \,.
\end{equation*}

\textbf{Step 2.} The goal is now to let $t$ vanish in this last expression. For a fixed $t>0$,  denote by $\bb_1^\Psi$  the function given by \eqref{eq:bb1} but with $f$ replaced by $\Psi_t$. Then
\[P_t^{(1)}\Psi_t(x,s) = \bb_1^\Psi(t,x,s) + \alpha(x) + \beta(t,x)\]
with 
\[\alpha(x) =  \int_0^{+ \infty}    \pi (L_c S_r^{(0)} \Psi_t) (x) dr \,,\qquad \beta(t,x)= \int_0^t      \pi( L_c \bb_1^\Psi)(r, \bar\varphi_{t-r}(x)) dr\,. \]
First, by definition of $Q^{-1}$, the average of $\bb_1^\Psi$ with respect to $\pi$, hence with respect to $\mu_0$, is zero. At this stage, we have obtained that $c_1=-\mu_0\alpha-\mu_0 \beta$. 
%Hence,
%\[c_1 = \mu_0 P_t^{(1)}\Psi_t =  \theta_1^\Psi(t,\bar x) =  \theta_1^\Psi(0,\bar x)  + \int_0^t \pi L_c \bb_1^\Psi(s,\bar x)ds. \]
Recalling that  $S_r^{(0)} f = \mathcal Q_r(f-\pi f)$, from
\[  \Psi_t(x,s) = \pi f(\bar x)  - f(x,s) + \sum_{n=1}^\infty \pi \po f(\bar x) - f(\varphi_{tn}(x))\pf, \]
integrating with respect to $\pi$, we get
\[\na_x S_r^{(0)} \Psi_t(\bar x,s) = \na_x \mathcal Q_r\po  \Psi_t - \pi \Psi_t \pf (\bar x,s) = \mathcal \na_x \mathcal Q_r \po   \pi f -    f \pf (\bar x,s) . \]
As a consequence,
\begin{align*}
\mu_0(\alpha)  = \alpha(\bar x) &= \int_0^\infty \pi L_c S_r^{(0)} \Psi_t(\bar x) dr\\
%&= \int_0^\infty \pi \po F(\bar x) \cdot \mathcal Q_r \po \na_x \pi f(\bar x) - \na_x  f(\bar x,s)\pf \pf  d r\\
&=  \int_{\mathcal{S}}  F_s(\bar x) \cdot \co \int_0^\infty \na_x Q_r \po  \pi f  -  f \pf dr \cf(\bar x, s) \pi(ds)\\
&=  \int_{\mathcal S} F_s(\bar x) \cdot  \na_x  Q^{-1}   f(\bar x,s)       \pi(ds)\\
%& = \pi \po F\cdot Q^g\po \na f- \na\pi f \1 \pf \pf (\bar x)\\
& = \pi L_c Q^{-1} f(\bar x).
\end{align*}
 Besides, 
\begin{align*}
    \mu_0 (\beta) &=   \int_0^t \pi L_c \bb_1^\Psi(r,\bar x)dr \\
    &= \pi \co L_c \int_0^t \bb_1^\Psi(r,\cdot )dr\cf (\bar x)\\
     &= \pi \co L_c  \int_0^t Q^{-1} \po \partial_r P_r^{(0)} \Psi_t  - L_c P_r^{(0)} \Psi_t \pf  dr\cf (\bar x)\\
     &= -\pi \co L_c Q^{-1} \int_0^t    L_c P_r^{(0)} \Psi_t  dr\cf (\bar x)\,,
\end{align*}
where we used that $\partial_r P_r^{(0)} \Psi_t$ does not depend on  $s$ and is thus in the kernel of $Q^{-1}$. Hence,
\begin{align*}
    \int_0^t \pi L_c \bb_1^\Psi(s,\bar x)ds 
     &= -\pi \co L_c Q^{-1} \frac1t \int_0^t   L_c   P_r^{(0)} (t\Psi_t)    dr\cf (\bar x)\\
     & \underset{t\rightarrow 0}\longrightarrow -\pi \co L_c Q^{-1}     L_c (\pi h) \cf (\bar x),
\end{align*}
where we used that $P_0^{(0)} f= \pi f $ and that $t\Psi_t \rightarrow h$. Indeed, thanks to Assumption~\ref{assu:fast},
\begin{eqnarray*}
 t \Psi_t(x) &= & t\pi f(\bar x)- tf(x,s) +t \sum_{n=1}^\infty \pi \po f(\bar x) - f(\varphi_{tn}(x))\pf \\
  & \underset{t\rightarrow0}\longrightarrow &  \int_0^\infty \pi \po f(\bar x) - f\po \varphi_r(x)\pf\pf dr  \ = \ h(x)\,,
\end{eqnarray*}
and the convergence $t\Psi \rightarrow h$ holds in all norms $\norm{\cdot}_j$. The proof of \eqref{eq:c1} is concluded by letting $t$ vanish in the equality $c_1 = -\mu_0 \alpha - \mu_0 \beta$ (since $c_1$ is independent from $t$).

\end{proof}

\section{Applications}\label{sec:applications}

\subsection{Top Lyapunov exponent for cooperative linear Markov-modulated ODEs}\label{sec:appli_Lyapunov}

In this section, we consider linear Markov-modulated ODEs on $\R^d$ of the form
\begin{equation}\label{eq:EDOrandom}
 z'(t) = A\po \sigma(t/\varepsilon )\pf z(t)    
\end{equation}
where, for all $\sigma \in\mathcal S$, $A\po \sigma\pf $ is a $d\times d$ matrix. We work under Assumption~\ref{assu:settings}  and write $\bar A = \int_{\mathcal S} A(s)\pi(ds)$. Moreover, we focus  on the settings of \cite{NoteTopLyapunov}, given as follows:
\begin{assu}\label{assu:Lyapunov}
\begin{enumerate}
    \item The Markov process $(\sigma(t))_{t\geqslant 0}$ is Feller.
    \item For all $s\in\mathcal S$, $A(s)$ is a cooperative matrix, in the sense that its off-diagonal coefficients   are non-negative.
    \item The averaged matrix $\bar A$ is irreducible in the sense that for all $i,j\in\cco 1,d\ccf$ there exists a path $i=i_0,\dots,i_q=j$ with $\bar A_{i_{k-1},i_{k}} >0$ for all $k \in \cco 1,q \ccf$.
\end{enumerate}
\end{assu}
The fact that the matrices are cooperative implies that $\R_+^d$ is fixed by \eqref{eq:EDOrandom} for $t\geqslant 0$ and that Assumption~\ref{assu:fast} holds. We decompose solutions of \eqref{eq:EDOrandom} on $\R_+^d$ as $z(t) = \rho(t)\theta(t)$ where $\rho(t) = \1\cdot z(t)>0$ with  $\1=(1,\dots,1) \in \R^d$ and $\theta(t) = z(t)/\rho(t) \in \Delta = \{x\in\R_+^d, x_1+\dots+x_d=1\}$. The ODE \eqref{eq:EDOrandom} is then equivalent to
\[\rho'(t) = (\1\cdot A\po \sigma(t/\varepsilon)\pf  \theta(t)) \rho(t)\,,\qquad \theta'(t) = F_{\sigma(t/\varepsilon)}(\theta(t))\]
with
\[F_s(\theta) = A(s)\theta - (\1\cdot A(s)\theta )\theta\,.\]
It is proven in \cite{NoteTopLyapunov} that, under Assumption~\ref{assu:Lyapunov},  the Markov process $(\theta(t),\sigma(t/\varepsilon))_{t\geqslant 0}$ admits a unique invariant measure $\mu_\varepsilon$ on $ \Delta \times \mathcal S$ and that for all initial conditions $z\in \R_+^d\setminus\{0\}$, almost surely, 
\begin{equation}
    \lim_{t \to \infty} \frac{\rho(t)}{t} = \Lambda_\varepsilon,
\end{equation}
where
\begin{equation}
\label{eq:Lyapepsilon}
    \Lambda_\varepsilon := \int_{ \Delta \times \mathcal S} \1 \cdot A(s) \theta\, \mu_\varepsilon( \dd \theta, \dd s)
\end{equation}
is called the top Lyapunov exponent of the process. Denoting by $\lambda_{\max}(\bar A)$ the principal eigenvalue of $\bar A$ (i.e., with maximal real part), Proposition 4  in \cite{NoteTopLyapunov} entails that 
\[
\lim_{ \varepsilon \to 0} \Lambda_ \varepsilon = \lambda_{\max}( \bar A)\,.
\]
From Theorem~\ref{thm:expansion_mu} applied to the function $f :(\theta,s) \mapsto \1 \cdot A(s) \theta $, we get an expansion of $\Lambda_\varepsilon$ for small $\varepsilon$.

\begin{prop}\label{prop:lyapunov}
Under Assumptions~\ref{assu:settings} and \ref{assu:Lyapunov}, there exists a sequence $(c_k)_{k\geqslant 1}$ of real numbers such that, for all $n\geqslant 1$,
\[
\Lambda_\varepsilon = \lambda_{\max}( \bar A) + \sum_{k=1}^n c_k \varepsilon^k + \underset{\varepsilon\rightarrow 0}o(\varepsilon^n)\,.
\]
Moreover, denoting by $\bar x$ and $\bar y$, respectively, the right and left eigenvectors of $\bar A$ associated with $\lambda_{\max}( \bar A)$ and such that $\bar x\in\Delta$ and $\bar x \cdot \bar y = 1$, it holds
\[
c_1 =  \int_{\mathcal{S}} \bar y^{\intercal} Q^{-1}(A)(s) \po  \bar x \bar y^{\intercal} - I \pf A(s) \bar x \pi( \dd s)\,.
\]

\end{prop}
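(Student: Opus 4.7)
The plan is to apply Theorem~\ref{thm:expansion_mu} with $f(\theta,s) = \1 \cdot A(s)\theta$, so that $\Lambda_\varepsilon = \mu_\varepsilon f$ by \eqref{eq:Lyapepsilon}. Assumption~\ref{assu:settings} holds by hypothesis, and Assumption~\ref{assu:fast} is invoked from the preceding discussion (standard Perron--Frobenius theory for $\bar A$ ensures that its principal eigenvector $\bar x \in \Delta$ is the globally exponentially attractive equilibrium of $\bar F(\theta) = \bar A\theta - (\1\cdot\bar A\theta)\theta$). The zeroth-order term is
\[\mu_0 f = \int_{\mathcal S} \1 \cdot A(s) \bar x \, \pi(\dd s) = \1 \cdot \bar A \bar x = \lambda_{\max}(\bar A)\,,\]
using $\bar A\bar x = \lambda_{\max}(\bar A)\bar x$ and $\1\cdot\bar x = 1$. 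This immediately yields the announced expansion and recovers the limit from \cite{NoteTopLyapunov} at order zero.

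To evaluate $c_1$ via \eqref{eq:c1}, I first derive a closed form for the function $h$ defined in \eqref{eq:def_h}. Writing $z(r) = e^{r\bar A}\theta$ so that $\varphi_r(\theta) = z(r)/(\1\cdot z(r))$, one has
\[\pi f(\varphi_r(\theta)) = \frac{\1\cdot\bar A z(r)}{\1\cdot z(r)} = \frac{\dd}{\dd r}\log(\1\cdot z(r))\,.\]
By the spectral gap of $\bar A$, $\1\cdot z(r) \sim (\bar y^{\intercal}\theta)e^{r\lambda_{\max}(\bar A)}$ as $r\to\infty$ (with the normalization $\bar y^{\intercal}\bar x = 1$ and $\1 \cdot \bar x = 1$). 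Integrating from $0$ to $\infty$ and using $\1\cdot\theta = 1$ gives the explicit formula $h(\theta) = -\log(\bar y^{\intercal}\theta)$, which is independent of $s$, with $\na_\theta h(\bar x) = -\bar y$.

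A short computation using $F_s(\theta)\cdot\bar y = \bar y^{\intercal}A(s)\theta - (\1\cdot A(s)\theta)(\bar y^{\intercal}\theta)$ then gives
\[g(\theta,s) := L_c h(\theta,s) - f(\theta,s) = -\frac{\bar y^{\intercal}A(s)\theta}{\bar y^{\intercal}\theta}\,.\]
Since $Q^{-1}$ acts only on $s$ and is linear, applying it entry-wise to $A$ (writing $B(s) := Q^{-1}(A)(s)$) gives $Q^{-1}g(\theta,s) = -\bar y^{\intercal}B(s)\theta/(\bar y^{\intercal}\theta)$. Computing the $\theta$-gradient at $\bar x$ (namely $-B(s)^{\intercal}\bar y + (\bar y^{\intercal}B(s)\bar x)\bar y$, using $\bar y^{\intercal}\bar x = 1$), pairing with $F_s(\bar x) = A(s)\bar x - (\1\cdot A(s)\bar x)\bar x$, the two contributions proportional to $\1\cdot A(s)\bar x$ cancel exactly, leaving
\[L_c Q^{-1}g(\bar x,s) = -\bar y^{\intercal}B(s)A(s)\bar x + (\bar y^{\intercal}B(s)\bar x)(\bar y^{\intercal}A(s)\bar x) = \bar y^{\intercal} Q^{-1}(A)(s)\po \bar x\bar y^{\intercal} - I\pf A(s)\bar x\,.\]
Integrating against $\pi(\dd s)$ yields the announced formula for $c_1$. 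The only step requiring any cleverness is the identification $h(\theta) = -\log(\bar y^{\intercal}\theta)$; once that is in hand, the rest is algebraic bookkeeping exploiting the normalizations $\bar y^{\intercal}\bar x = 1$ and $\1\cdot\bar x = 1$.
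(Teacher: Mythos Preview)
Your proof is correct and follows essentially the same approach as the paper: both apply Theorem~\ref{thm:expansion_mu} to $f(\theta,s)=\1\cdot A(s)\theta$, derive the key identity $h(\theta)=-\log(\bar y^{\intercal}\theta)$ via $\pi f(\varphi_r(\theta))=\frac{\dd}{\dd r}\log(\1\cdot e^{r\bar A}\theta)$ and Perron--Frobenius asymptotics, and then carry out the same gradient computation and cancellation of the $\1\cdot A(s)\bar x$ terms. The only cosmetic difference is that the paper writes the integral for $h$ as $\lim_{r\to\infty}\log\frac{\1\cdot e^{r\bar A}\bar x}{\1\cdot e^{r\bar A}\theta}$ rather than subtracting $r\lambda_{\max}(\bar A)$ directly.
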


\begin{proof}
 First, considering $h$ given by \eqref{eq:def_h}, we claim that for all $(x, s) \in \Delta \times \mathcal{S},$
\begin{equation}
    h(x,s) = - \ln( \bar y \cdot x).
    \label{eq:hLyap}
\end{equation}
Indeed, note that, for all $x \in \mathcal{M}$,
\begin{align*}
  \pi f\po \varphi_r(x)\pf & = \frac{\1 \cdot \bar A e^{r \bar A}x}{\1_d \cdot e^{r \bar A}x}  \\
  & = \frac{d}{dr} \ln( \1  \cdot e^{r \bar A}x)\,.
\end{align*}
Theorefore, using that $\bar x = \varphi_r(\bar x)$, one has
\begin{align*}
    \int_0^\infty \po \pi f\po \bar x \pf - \pi f \po \varphi_r(x) \pf \pf \dd r & =   \int_0^\infty \frac{d}{dr} \po \ln \po \frac{\1 \cdot e^{r \bar A}\bar x}{\1 \cdot e^{r \bar A}x} \pf \pf \dd r\\
    & = \lim_{r \to \infty} \ln \po \frac{\1 \cdot e^{r \bar A} \bar x}{\1 \cdot e^{r \bar A}x} \pf\\
    & =-  \ln ( \bar y \cdot x ),
\end{align*}
where we have used  Perron-Frobenius Theorem. 
From~\eqref{eq:hLyap}, we deduce that $L_c h(x,s) = - F_s(x) \cdot \frac{\bar y}{\bar y \cdot x}$.  Now, using that  $F_s(x) = A(s) x - f_s(x) x$, we get that 
\[
L_c h(x,s) - f(x,s)   = - \frac{A(s) x \cdot \bar y}{\bar y \cdot x},
\]
and then
\[
Q^{-1}(L_c h-f)(x,s) =- \frac{Q^{-1}(A)(s) x \cdot \bar y}{\bar y \cdot x}\,,
\]
from which we deduce that
\begin{align*}
\na_x Q^{-1}(L_c h-f)(\bar x,s) &=   \frac{Q^{-1}(A)(s) \bar x \cdot \bar y}{(\bar y \cdot \bar x)^2 }\bar y - \frac{[Q^{-1}(A)(s)]^{\intercal}\bar y}{\bar y \cdot \bar  x}  \\
&=     (Q^{-1}(A)(s) \bar x \cdot \bar y )\bar y - [Q^{-1}(A)(s)]^{\intercal}\bar y   \,,
\end{align*}
where we have used that $\bar x \cdot \bar y = 1$ As a consequence,
\begin{eqnarray*}
\lefteqn{F_s(\bar x) \cdot \na_x Q^{-1}(L_c h-f)(\bar x,s)}\\
&=& \po A(s) \bar x - (\1\cdot A(s) \bar x) \bar x \pf\cdot  \po    (Q^{-1}(A)(s) \bar x \cdot \bar y )\bar y - [Q^{-1}(A)(s)]^{\intercal}\bar y \pf \\
    &=&  (Q^{-1}(A)(s) \bar x \cdot \bar y ) A(s) \bar x \cdot  \bar y  - A(s) \bar x \cdot    [Q^{-1}(A)(s)]^{\intercal}\bar y \\
    & & -  (\1\cdot A(s) \bar x) (Q^{-1}(A)(s) \bar x \cdot \bar y ) \bar x \cdot \bar y  + (\1\cdot A(s) \bar x) \bar x \cdot   [Q^{-1}(A)(s)]^{\intercal}\bar y \,.
\end{eqnarray*}
Using that $\bar x \cdot \bar y =1$, we see that the last line is equal to 
\[-  (\1\cdot A(s) \bar x) Q^{-1}(A)(s) \bar x \cdot   \bar y  +  (\1\cdot A(s) \bar x) Q^{-1}(A)(s) \bar x \cdot \bar y    = 0 \,.\]
We end up with 
\begin{eqnarray*}
\lefteqn{F_s(\bar x) \cdot \na_x Q^{-1}(L_c h-f)(\bar x,s)}\\
&=&    (Q^{-1}(A)(s) \bar x \cdot \bar y ) A(s) \bar x \cdot  \bar y - Q^{-1}(A)(s)  A(s) \bar x \cdot    \bar y  \\
&=& \bar y^{\intercal}  Q^{-1}(A)(s)  \co      \bar x  \bar y^{\intercal} - I\cf  A(s)  \bar x  \,.
\end{eqnarray*}
Integrating with respect to $\pi$ leads to
\[
c_1 =  \int_{\mathcal{S}} \bar y^{\intercal} Q^{-1}(A)(s) \po \bar x \bar y^{\intercal} - I \pf A(s) \bar x \pi( \dd s)\,.
\]
\end{proof}

\begin{exemple}
If $\sigma$ is a Markov chain on $\mathcal{S}= \{0, 1\}$,  with matrices rates given by 
\begin{equation}
\label{eq:ratematrix2x2}
    Q =  \begin{pmatrix}
- p & p \\
1 -p & -(1 - p)
\end{pmatrix}\,,
\end{equation}
we get the following simple expression for $c_1$:
\begin{equation}
    \label{eq:c1-2matrices}
c_1 = p(1-p) \left[ \bar y^{\intercal}(A_0 - A_1)^2 \bar x -  \left(\bar y^{\intercal} (A_0 - A_1) \bar x\right)^2  \right].
\end{equation}
 Indeed, recalling that $Q^{-1} = Q$ (see Example~\ref{exemple:Q=matrix}) and using that  $\pi_s Q_{s,s'}= p(1-p)$ if $s\neq s'$ and $-p(1-p)$ if $s=s'$, we get
\[
c_1 = \bar y^{\intercal} \left( A_0P A_0 + A_1 P A_1 - A_0 P A_1 - A_1 P A_0 \right) \bar x,
\]
where $P = I - \bar x \bar y^{\intercal}$. Now,
\begin{align*}
    \bar y^{\intercal} A_0 \bar x \bar y^{\intercal} A_1 \bar x + & \bar y^{\intercal} A_1 \bar x \bar y^{\intercal} A_0 \bar x \\
    & - \bar y^{\intercal} A_0 \bar x \bar y^{\intercal} A_0 \bar x - \bar y^{\intercal} A_1 \bar x \bar y^{\intercal} A_1 \bar x \\
    & =  - \left(\bar y^{\intercal} (A_0 - A_1) \bar x\right)^2
\end{align*}
and
\[
\bar y^{\intercal} A_0  A_1 \bar x +  \bar y^{\intercal} A_1  A_0 \bar x 
     - \bar y^{\intercal} A_0 A_0 \bar x - \bar y^{\intercal} A_1 A_1 \bar x  =- \bar y^{\intercal}( A_0 - A_1)^2 \bar x ,
\]
which induces formula \eqref{eq:c1-2matrices}.
\end{exemple}
When considering the switching between matrices $(A_i)_{i \in \mathcal{S}}$, the following natural question arises: if all the matrices are stable (i.e. $\lambda_{\max}(A_i) < 0$ for all $i \in \mathcal{S}$), is the switched system \eqref{eq:EDOrandom} stable, in the sense that $\Lambda_{\varepsilon} < 0$? It is now known  that this is not true. In the case of random switching between two $2 \times 2$ matrices, examples of stable matrices giving an unstable system can be founded in \cite{Benaim2014Stability} and \cite{LMR14}. In the first reference, for $p=1/2$, $\lambda_{\max}(\bar A) < 0$ so that  fast  switching leads to a unstable system. In the second reference, it is a bit more complicated, since $\lambda_{\max}(\bar A) > 0$ for all $p \in (0,1)$, meaning that both slow and fast switching lead to a stable system. However, the authors prove in \cite{LMR14} that switching not too fast, nor too slowly, the system is unstable. Yet, in \cite{LMR14}, the matrices are not cooperative in the sense of Assumption~\ref{assu:Lyapunov}. This is not surprising, since it is proven in \cite{gurvits07} that switching between cooperative matrices of size $2 \times 2$ such that every matrices in the convex hull of the given matrices are stable, will \textit{always} lead to a stable system. However, it is also shown in \cite{gurvits07} that it is possible  in some  higher dimension  to construct  an example where all the matrices in the convex hull are stable, and for which there exists a periodic switching such that the linear system explodes. Later, an explicit example in dimension 3 was given by Fainshil, Margaliot and Chiganski \cite{onpls}. Precisely, consider the matrices
\begin{equation}\label{eq:A0A1}
    A_0 = \begin{pmatrix}
   -1 & 0 & 0\\
   10 & -1 & 0\\
   0 & 0 & -10
\end{pmatrix}, \; A_1 = \begin{pmatrix}
   -10 & 0 & 10\\
   0 & -10 & 0\\
   0 & 10 & -1
\end{pmatrix}.
\end{equation}
It is shown in \cite{onpls} that every convex combination of  $A_0$ and $A_1$ is stable, and yet a switch of period 1 between  $A_0$ and $A_1$ yields an explosion. In \cite{benaim2019}, the authors asked the question whether the same system but with a Markovian switching can lead to explosion. Using numerical simulations, they suggest that this is true and the Lyapunov exponent is positive for not too fast nor too slow switching. Now, using Formula~\eqref{eq:c1-2matrices}, we prove rigorously the following assertion:

\begin{prop}
There exists two cooperative matrices $B_0$, $B_1$ such that:
\begin{enumerate}
    \item For all $p \in [0,1]$, $\lambda_{\max}((1-p)B_0 + p B_1) < 0$,
    \item For some $\varepsilon > 0$ and $p \in (0,1)$, the top Lyapunov exponent $\Lambda_{\varepsilon}$ of the system~\eqref{eq:EDOrandom} with $\sigma$ generated by $Q$ given in~\eqref{eq:ratematrix2x2} is positive.
\end{enumerate}
\end{prop}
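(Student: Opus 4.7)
The plan is to apply Proposition~\ref{prop:lyapunov} to the Fainshil--Margaliot--Chiganski matrices $A_0, A_1$ in \eqref{eq:A0A1} at the transition parameter $p = 1/2$, verify that the first-order correction $c_1$ is strictly positive, and then define $(B_0, B_1)$ by a shift $B_i = A_i - \alpha_i I$ calibrated so as to make the top Lyapunov exponent strictly positive while preserving stability of all convex combinations.

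For the first step, I observe that $\bar A := \tfrac12 (A_0 + A_1)$ is a circulant matrix, whose Perron data is explicit: $\lambda_{\max}(\bar A) = -1/2$ with right and left eigenvectors $\bar x = \tfrac13 (1,1,1)^{\intercal}$ and $\bar y = (1,1,1)^{\intercal}$ (normalized so that $\bar x \cdot \bar y = 1$). Substituting into formula~\eqref{eq:c1-2matrices} (computing $\bar y^{\intercal}(A_0 - A_1)\bar x$ and $\bar y^{\intercal}(A_0 - A_1)^2 \bar x$ explicitly) yields $c_1 = \tfrac{242}{9} > 0$, so Proposition~\ref{prop:lyapunov} gives $\Lambda_\varepsilon(A; 1/2) = -\tfrac12 + \tfrac{242}{9}\varepsilon + O(\varepsilon^2)$; in particular $\Lambda_\varepsilon(A; 1/2) > -\tfrac12$ for all small enough $\varepsilon > 0$.

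For the shift, I would exploit the fact that $B_i = A_i - \alpha_i I$ preserves cooperativity, and that the drift on the simplex $F_s(\theta) = A_s \theta - (\1 \cdot A_s \theta)\theta$ is invariant under $A_s \mapsto A_s - \alpha_s I$, so the invariant measure $\mu_\varepsilon$ of $(\theta(t), \sigma(t/\varepsilon))$ is unchanged. This yields the clean identities $\Lambda_\varepsilon(B; p) = \Lambda_\varepsilon(A; p) - \bigl( (1-p)\alpha_0 + p \alpha_1 \bigr)$ and $\lambda_{\max}(\bar B(q)) = g(q) - (1-q)\alpha_0 - q\alpha_1$, where $g(q) := \lambda_{\max}(\bar A(q))$. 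Producing $(B_0, B_1)$ satisfying~(1) and~(2) thus reduces to exhibiting an affine function $\alpha(q) = (1-q)\alpha_0 + q\alpha_1$ that strictly dominates $g$ on $[0,1]$ yet with $\alpha(1/2) < \Lambda_\varepsilon(A; 1/2)$ for some $\varepsilon > 0$. I would take $\alpha$ to be a small upward perturbation of the tangent to $g$ at $q = 1/2$, namely $\alpha(q) = -\tfrac12 + \tfrac{q - 1/2}{3} + \eta$ (the slope $g'(1/2) = 1/3$ being obtained by implicit differentiation of the characteristic polynomial of $\bar A(p)$); then $\alpha(1/2) = -\tfrac12 + \eta$, so that for small $\varepsilon > 0$ and any $0 < \eta < \tfrac{121}{9}\varepsilon$ one gets $\Lambda_\varepsilon(B; 1/2) = \tfrac{242}{9}\varepsilon - \eta + O(\varepsilon^2) > 0$.

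The main obstacle is justifying that this perturbed tangent dominates $g$ globally on $[0,1]$, equivalently that the concave envelope $g^{**}$ satisfies $g^{**}(1/2) = g(1/2) = -\tfrac12$. I would attack this directly from the explicit characteristic equation $(\lambda + 1 + 9p)^2 (\lambda + 10 - 9p) = 1000\, p^2 (1-p)$ of $\bar A(p)$, which is simple enough to compare $g(p)$ to the affine function $-\tfrac12 + (p - 1/2)/3$ throughout $[0,1]$. Should this global domination fail, a natural fallback is to replace $1/2$ by a maximizer $p^*$ of $g$: the constant line $\alpha \equiv g(p^*)$ then trivially dominates $g$ on $[0,1]$, and the continuity of $c_1(\cdot)$ together with $c_1(1/2) > 0$ guarantees $c_1(p^*) > 0$, so that the same shift procedure yields $\Lambda_\varepsilon(B; p^*) > 0$ for small $\varepsilon > 0$.
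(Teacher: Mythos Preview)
Your approach is correct and genuinely different from the paper's. The paper works at a maximizer $p^*$ of $g(p)=\lambda_{\max}(\bar A(p))$ and uses a \emph{constant} shift $B_i=A_i+\delta I$; both the location of $p^*$ and the positivity of $c_1(p^*)$ are read off numerical plots rather than established analytically. You instead exploit the special structure at $p=1/2$ (where $\bar A$ is circulant, so $\lambda_{\max}=-1/2$, $\bar x=\tfrac13\1$, $\bar y=\1$ and $c_1=242/9$ are all exact) and compensate for $1/2$ not being the maximizer by allowing a non-constant shift along the tangent line. This buys you a fully explicit, plot-free argument.

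Your ``main obstacle'' is in fact easily resolved along the line you suggest. Since $\phi(\lambda):=(\lambda+1+9p)^2(\lambda+10-9p)$ is increasing in $\lambda$ on the relevant range, the inequality $g(p)\le \ell(p):=-\tfrac12+\tfrac{p-1/2}{3}$ is equivalent to $\phi(\ell(p))\ge 1000p^2(1-p)$, i.e.\ to
\[
(1+28p)^2(28-26p)-27000\,p^2(1-p)\ \ge\ 0\,.
\]
This cubic in $p$ factors as $6616\,(p-\tfrac12)^2(p+\tfrac{14}{827})$, which is nonnegative on $[0,1]$ with equality only at $p=1/2$. Hence your perturbed tangent $\alpha(q)=\ell(q)+\eta$ strictly dominates $g$ on $[0,1]$ for every $\eta>0$, and the rest of your argument goes through.

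One genuine slip: in your fallback, the claim that ``continuity of $c_1(\cdot)$ together with $c_1(1/2)>0$ guarantees $c_1(p^*)>0$'' is unfounded --- continuity gives positivity only in a neighbourhood of $1/2$, and nothing forces $p^*$ to lie there (indeed $g(0.6)>-1/2=g(1/2)$, so $p^*\neq 1/2$). The paper's version of this fallback establishes $c_1(p^*)>0$ by inspecting the graph of $c_1$, not by continuity. Since your main argument succeeds, the fallback is not needed.
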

 \begin{proof}
 To emphasize the dependency on $p$,we write $\bar A(p), \Lambda_{\varepsilon}(p)$ and $c_1(p)$ for the mean matrix, the top Lyapunov exponent and the first order derivative, respectively, when the switching rates are given by the matrix $Q$ in~\eqref{eq:ratematrix2x2}, and the matrices are \eqref{eq:A0A1}. We also write $\lambda_{\max}(p)$ for $\lambda_{\max}(\bar A (p))$. In Figure~\ref{fig:lambdamax} is plotted $p \mapsto \lambda_{\max}(p)$. One sees that the maximal value of this function is attained at some $p^* \in [0.3, 0.5]$ and worth $\lambda_{\max}^* \in [-0.5, -0.45]$. One sees on Figure~\ref{fig:c1}, where is plotted $p \mapsto c_1(p)$, that on the interval $[0.3,  0.5]$, we have $c_1(p) \geq 15$, so that in particular $c_1(p^*) \geq 15 > 0$. 

\begin{figure}
\begin{subfigure}{.5\textwidth}
  \centering
  \includegraphics[width=.9\linewidth]{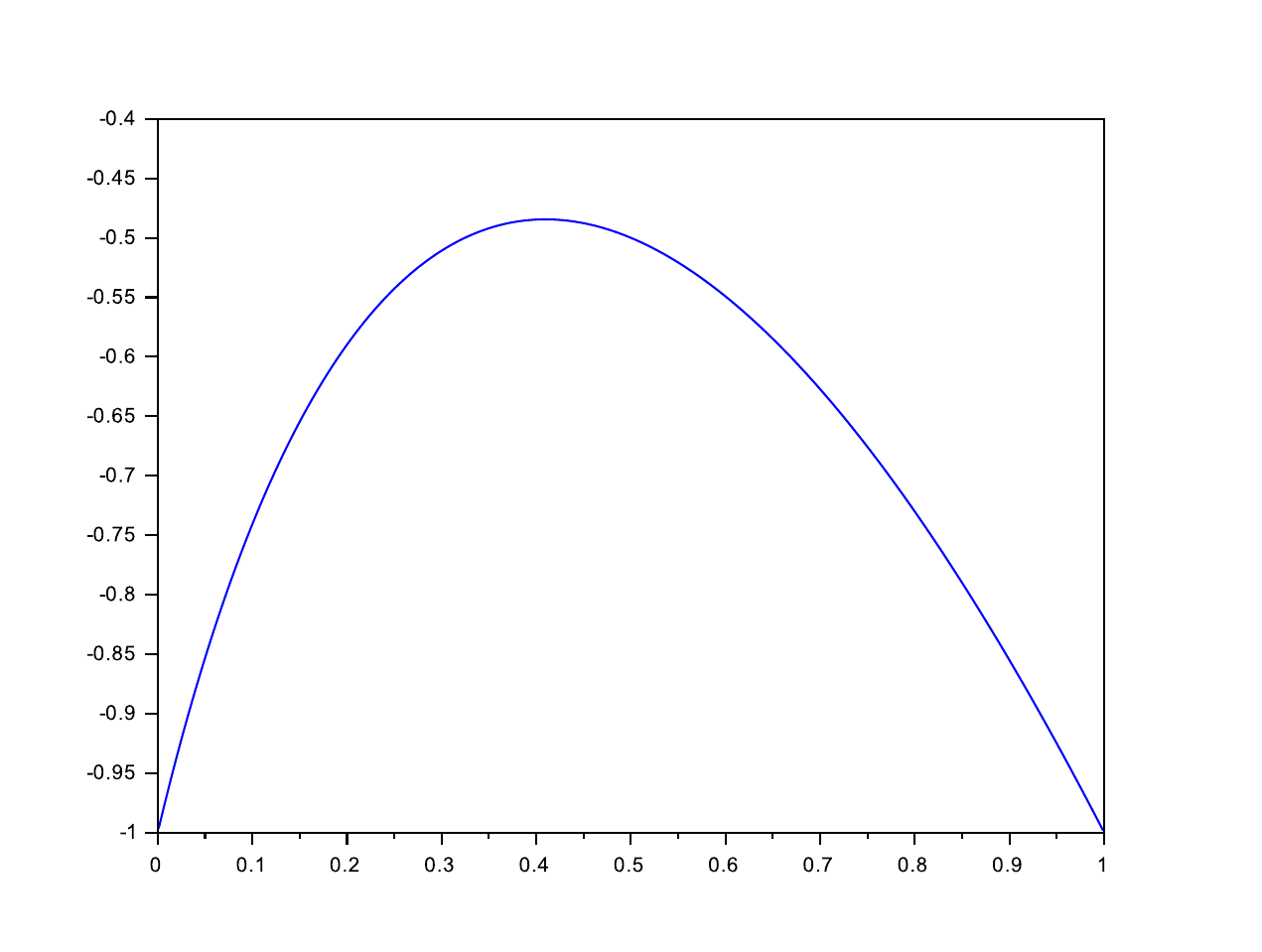}
  \caption{$p \mapsto \lambda_{\max}(p)$}
  \label{fig:lambdamax}
\end{subfigure}% 
\begin{subfigure}{.5\textwidth}
  \centering
  \includegraphics[width=.9\linewidth]{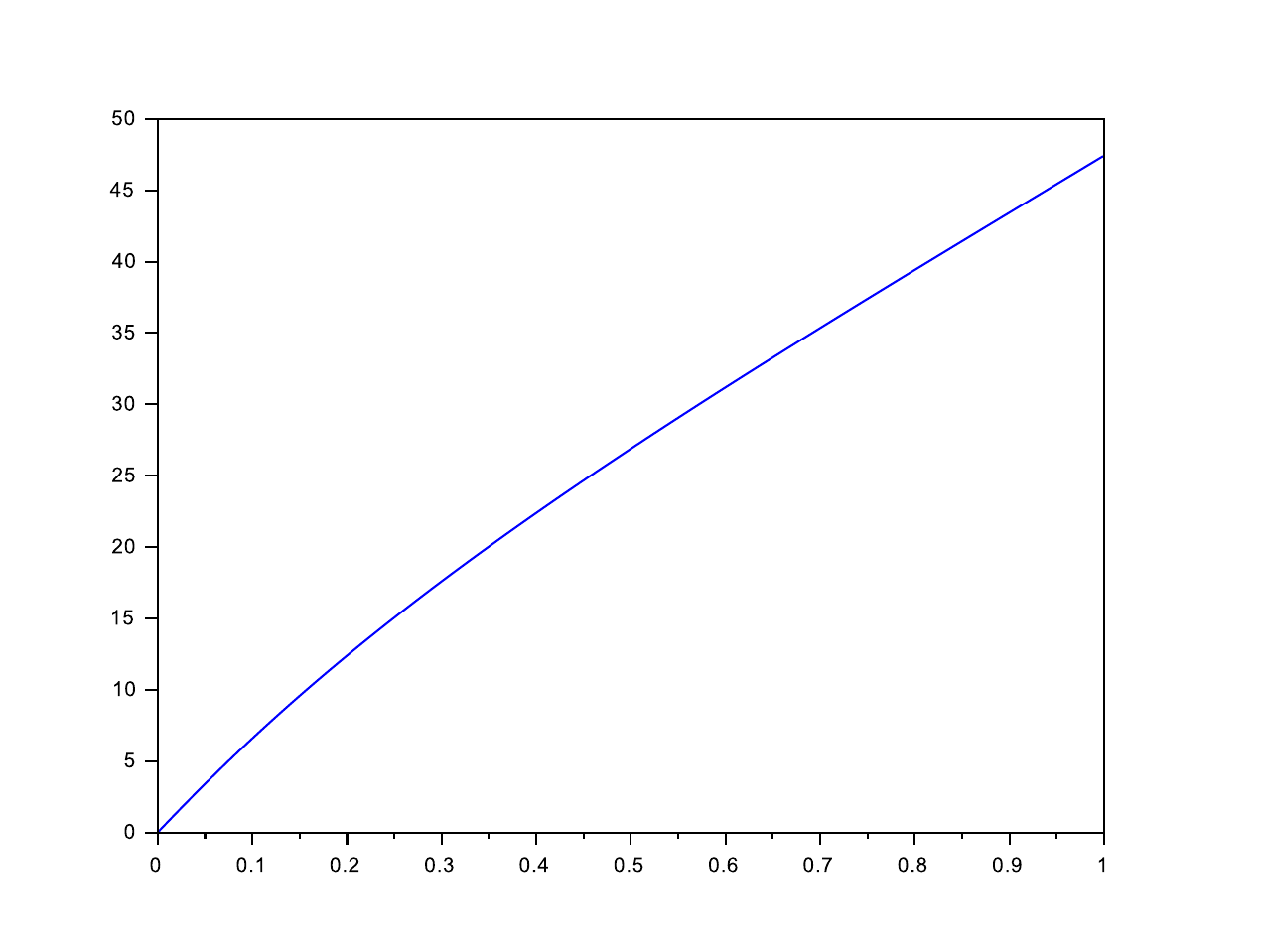}
  \caption{$p \mapsto c_1(p)$}
  \label{fig:c1}
\end{subfigure}
\end{figure} 

Let $\delta > 0$ and $A_i^{\delta} = A_i + \delta I$. Replacing $A_i$ by $A_i^{\delta}$ in~\eqref{eq:EDOrandom}, we easily check that the Lyapunov exponent of the system is $\Lambda_{\varepsilon}^{\delta}(p) = \Lambda_{\varepsilon}(p) + \delta$. Moreover, $f_i^{\delta}(x) = \1 \cdot (A_i + \delta I)x$, so that $F_i^{\delta} = F_i$, and the dynamic of the angular part of~\eqref{eq:EDOrandom} is independent of $\delta$.
Hence, Proposition~\ref{prop:lyapunov} entails
\[
\Lambda_{\varepsilon}^{\delta}(p) \geq  \lambda_{\max}(p) + \delta + c_1(p) \varepsilon - M(p) \varepsilon^2,
\]
for some constant $M(p)$ depending on $p$. Choosing $p=p^*$ and $\varepsilon = \frac{c_1(p^*)}{2M(p^*)} > 0$ yields
\[
\Lambda_{\varepsilon}^{\delta}(p^*) \geq  \lambda_{\max}( p^*) + \delta + \frac{c_1(p^*)^2}{4M(p^*)}.
\]
Letting $\delta = - \frac{c_1(p^*)^2}{8M(p^*)} - \lambda_{\max}(p^*)$ concludes the proof, since
\begin{enumerate}
    \item $\Lambda_{\varepsilon}^{\delta}(p^*) \geq \frac{c_1^2(p^*)}{8M(p^*)} > 0;$
    \item For all $p \in [0,1]$, $\lambda_{\max}(A_p^{\delta}) \leq \lambda_{\max}(A_{p^*}^{\delta}) = \lambda_{\max}(p^*) + \delta = - \frac{c_1^2(p^*)}{8M(p^*)} < 0.$
\end{enumerate}
  \end{proof}
  
  \begin{exemple}
Assume as in Example~\ref{exemple:Q=pi-I} that $Qf = Q^{-1}f = \pi f - f$. Then, $Q^{-1}(A)(s) = \bar A - A(s)$. Using $\bar y^{\intercal} \bar A =\lambda( \bar A)\bar y^{\intercal} $ and $\bar y^{\intercal}  \bar x= 1$, we get that $\bar y^{\intercal} \bar A ( \bar x \bar y^{\intercal}  - I) = 0$, which yields
\[
c_1 = \int_{\mathcal{S}} \bar y^{\intercal} A^2(s) \bar x - \left( \bar y^T A(s) \bar x \right)^2 \pi(ds)\,.
\]
Notice that, from Jensen's inequality,
\[
\int_{\mathcal{S}}  \left( \bar y^T A(s) \bar x \right)^2 \pi(ds) \geq \left( \int_{\mathcal{S}}  \bar y^T A(s) \bar x \pi(ds) \right)^2 = \lambda_{\max}(\bar A)^2\,,
\] 
so that
\[
    c_1  \leq  \int_{\mathcal{S}} \bar y^{\intercal} A^2(s) \bar x \pi(ds) - \lambda_{\max}(\bar A)^2  = \bar y^{\intercal} \overline{A^2} \bar x - \lambda_{\max}(\overline{A}^2) = \bar y^{\intercal} \left( \overline{A^2} - \overline{A}^2 \right) \bar x\,.
\]
  \end{exemple}

%Then for $p= 1/2$,
%\[
%\bar A=\frac{1}{2}\begin{pmatrix}
%-11  & 0 & 10\\
%10 & - 11 & 0\\
%0 & 10 & - 11
%\end{pmatrix}
%\]
%Since the sum of the entries of a row or a column is always $-1/2$, the principal eigenvalue of $\bar A$ is $-1/2$, with associated right eigenvector $\bar x = \frac{1}{3} \1$ and associated left eigenvector $\bar y = \1$. Using formula \eqref{eq:c1-2matrices}, easy computation then leads to
%\[
%c_1 = \frac{1}{4} \left( \frac{323}{3} - \frac{1}{9} \right) > 0.
%\]

\subsection{Richardson extrapolation for randomized splitting schemes}

In \cite{RandomSplitting}, in order to approximate the solution of an ODE of the form
\begin{equation}\label{eq:ODEMattingly}
    \dot y(t) = \bar F(y(t))\,,
\end{equation}
the vector field $\bar F$ is splited as  $\bar F(x) = \sum_{k=1}^N F_k(x)$, where each flow $\dot x=F_k(x)$ can be solved exactly. More precisely, the two main examples of interest in \cite{RandomSplitting} (to which we refer for details and motivations) are the so-called Lorenz-96 model and a finite-dimensional  Galerkin projection of the   vorticity formulation of 2D Navier-Stokes.  The trajectory $y$ is then approximated by a discrete time scheme of the form
\[x^\varepsilon_{n+1} = \Phi^1_{\tau_1/\varepsilon}\circ \dots \circ \Phi^N_{\tau_N/\varepsilon} (x^\varepsilon_n)\]
where $\Phi^i_t$ is the flow associated to $F_i$ at time $t$ and $\tau_1,\dots,\tau_N$ are independent random variables distributed according to the standard exponential law. More precisely, for a fixed $t>0$, $y(t)$ is approximated by $x_n^\varepsilon$ with $n\varepsilon =t$, for a small $\varepsilon$. Notice that this does not enter directly our framework. However, consider the Markov-modulated ODE
\[ \dot x_\varepsilon(t) = F_{\sigma(N t/\varepsilon)} (x_\varepsilon(t))\,,\] 
where $\sigma$ is a cyclic chain on $\cco 1,N\ccf$ with rate $1$ (i.e. it is a standard Poisson Process modulo $N$). We can take the times $\tau_i $ to define the jump times of $\sigma$, and thus $x_{n}^\varepsilon$ is exactly $x_\varepsilon(\varepsilon S_n/N)$ where $S_n$ is the $(nN)^{th}$ jump time of $\sigma$. In particular, if $n\varepsilon = t$, assuming that the vector fields $F_k$ are bounded,
\[|x_\varepsilon(t) - x_n^\varepsilon| = |x_\varepsilon(t) - x_\varepsilon\po \varepsilon S_n /N \pf | \leqslant \max_{k\in\cco 1,N\ccf}\|F_k\|_\infty |\varepsilon S_n/N-t|\,,\]
and
\[\mathbb E\left|\frac{\varepsilon S_n}N-t\right|^2 = \frac{n \varepsilon^2}{N} =  \frac{t \varepsilon}{N}\,. \]
In other words, $(x_\varepsilon(t))_{t\geqslant 0}$ (which can be sampled exactly if $(x_n^\varepsilon)_{n\in\N} $ can) can be seen as an alternative variation of the scheme of \cite{RandomSplitting}.

The convergence of $x_n^\varepsilon$ to $y(t)$ as $\varepsilon \rightarrow 0$ for a fixed $t=n\varepsilon$, or more precisely of the corresponding Markov transition operators, is stated in \cite[Theorem 4.1]{RandomSplitting}, which is thus similar to our Proposition~\ref{prop:dev-semigroup} at order $n=0$. Notice that, a priori, the ODE \eqref{eq:ODEMattingly} is in $\R^d$ but \cite[Theorem 4.1]{RandomSplitting} is proven by restricting the study on the orbit of the process, which is assumed to be bounded (this is Assumption 1 of \cite{RandomSplitting}), so that our results apply in this context.

The higher order expansion in Proposition~\ref{prop:dev-semigroup} enables the use of a Richardson extrapolation to get better convergence rates: for instance we obtain, for a fixed $t>0$ and $f\in\mathcal A$,
\[\left| f\po \varphi_t(x)\pf - 2 \mathbb E \po f(x_{\varepsilon/2}(t))\pf  + \mathbb E \po f(x_{\varepsilon}(t))\pf  \right| \leqslant C\|f\|_3 \varepsilon^2\,, \]
for some constant $C>0$.

\subsection{Species invasion rate in a two species Lotka-Volterra model in random environment}
\label{subsec:LV}
We consider a model of two species in interaction, in a random environment, described by the following system of Lotka - Volterra equations:
\begin{equation}
\begin{cases}
\frac{d X(t)}{dt} =  X(t) \left(a_ {10}^{\sigma(t)} - a_{11}^{\sigma(t)} X(t) + a_{12}^{\sigma(t)} Y(t) \right)\\
\frac{d Y(t)}{dt} =  Y(t) \left( a_{20}^{\sigma(t)} + a_{21}^{\sigma(t)} X(t) - a_{22}^{\sigma(t)} Y(t) \right)
\end{cases}
\label{eq:LV2dgeneral}
\end{equation}
We assume that $a^s_{11}, a^s_{22} > 0$ (intraspecific competition), the other parameters can be positive or negative. The coefficients $a_{10}^s$ and $a_{20}^s$ represents the growth rates of species 1 and 2 respectively, while the coefficients $a_{12}^s$ and $a_{21}^s$ models the interaction between the two species. If for example $a_{10}^s, a_{21}^s >0$ while $a_{20}^s, a_{12}^s < 0$, we get a prey - predator system (species 1 being the prey, and species 2 the predator). Assumption~\ref{assu:settings} is also enforced in this whole section.

In the absence of species 2 and after accelerating the dynamics of $\sigma$ by a factor $1/\varepsilon$, species 1 evolves according to a logistic equation in random environment:
\begin{equation}\label{eq:Lotka1}
\frac{d X(t)}{dt} = X(t) \left( a_{10}^{\sigma(t/\varepsilon)}  - a_{11}^{\sigma(t/\varepsilon)} X(t) \right)\,.
\end{equation}
We prove in the Annex the following result:
\begin{lem}
\label{lem:uniqueIPM}
Assume that  $p_0:= \inf_{s \in \mathcal{S}} \frac{a_{10}^s}{a_{11}^s} > 0$, $p_1:=\sup_{ s\in \mathcal{S}} \frac{a_{10}^s}{a_{11}^s} < + \infty$ and $\inf_{s \in \mathcal{S}} a_s^{11} > 0$.
Then the process $(X(t), \sigma(t/\varepsilon))_{t\geqslant 0}$ with $X$ given by \eqref{eq:Lotka1} and $X(0) \neq 0$ will ultimately lie in $E:= [p_0, p_1] \times \mathcal{S}$ and  admits a unique stationary distribution $\mu_{\varepsilon}$ on $E$.  Moreover, $(X(t), \sigma(t/ \varepsilon))$ converges in law to $\mu_{\varepsilon}$ when $t$ goes to infinity.
\end{lem}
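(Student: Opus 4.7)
The plan is to proceed in three stages: a purely deterministic analysis of the logistic ODE for a frozen realization of $\sigma$, a pathwise contraction estimate between two solutions sharing the same driving $\sigma$, and a coupling argument which delivers at once existence, uniqueness and convergence.

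First, I would rewrite \eqref{eq:Lotka1} as $\dot X = -a_{11}^{\sigma(t/\varepsilon)} X \bigl( X - a_{10}^{\sigma(t/\varepsilon)}/a_{11}^{\sigma(t/\varepsilon)}\bigr)$. By definition of $p_0, p_1$, the ratio $a_{10}^{s}/a_{11}^{s}$ belongs to $[p_0,p_1]$ for every $s\in \mathcal{S}$. Setting $\underline{a}:=\inf_{s}a_{11}^s>0$, this yields $\dot X \leqslant -\underline{a}\, X (X-p_1)$ whenever $X>p_1$ and $\dot X \geqslant \underline{a}\, X (p_0-X)$ whenever $0<X<p_0$. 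A direct comparison with the autonomous logistic equation then shows that $[p_0,p_1]$ is positively invariant and that any trajectory starting in $(0,+\infty)$ approaches this interval exponentially fast, uniformly in the realization of $\sigma$. This settles the first assertion.

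Second, for two solutions $X_1, X_2$ of \eqref{eq:Lotka1} driven by the \emph{same} path $\sigma$ with $X_i(0)\in[p_0,p_1]$, set $Z=\ln(X_1/X_2)$. Using that $X_1-X_2=X_2(e^Z-1)$, a direct computation gives
\[
\dot Z \;=\; -a_{11}^{\sigma(t/\varepsilon)}(X_1-X_2) \;=\; -a_{11}^{\sigma(t/\varepsilon)}\, X_2 \,(e^Z-1).
\]
Since $|Z|\leqslant\ln(p_1/p_0)=:M$ throughout, the ratio $(e^Z-1)/Z$ is bounded below by some $c>0$ on $[-M,M]$, so that $\frac{d}{dt}Z^2 \leqslant -2c\,\underline{a}\,p_0\, Z^2$; hence there exist $\lambda,C>0$, independent of $\sigma$ and of $X_i(0)\in[p_0,p_1]$, with $|X_1(t)-X_2(t)|\leqslant C e^{-\lambda t}|X_1(0)-X_2(0)|$.

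Third, the joint process $(X(t),\sigma(t/\varepsilon))$ eventually lives in the compact set $E$ and is Feller (its $X$-component depends smoothly on the initial datum between jumps of $\sigma$), so the Krylov--Bogolyubov procedure yields an invariant probability measure $\mu_\varepsilon$ supported on $E$. For uniqueness and convergence, I would run two copies of the process with the same realization of $\sigma$ and initial data $(X^{\nu}(0),\sigma(0))\sim\nu\otimes\pi$ and $(X^{\mu_\varepsilon}(0),\sigma(0))\sim\mu_\varepsilon$, for an arbitrary initial law $\nu$ on $(0,+\infty)$. Once both coordinates have entered $[p_0,p_1]$, the synchronous contraction of step 2 gives $|X^{\nu}(t)-X^{\mu_\varepsilon}(t)|\to 0$ almost surely at exponential rate, while the $\sigma$-coordinates coincide throughout, so that the law of $(X^{\nu}(t),\sigma(t/\varepsilon))$ converges to $\mu_\varepsilon$ in, for instance, bounded-Lipschitz distance. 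This proves both uniqueness of $\mu_\varepsilon$ and the announced convergence in law. The main technical point lies in this third step: one must justify the Feller property and the tightness required by Krylov--Bogolyubov under the generality of Assumption~\ref{assu:settings}, and check that the synchronous coupling is admissible for the given class of driving processes $\sigma$. For finite-state continuous-time chains (the main example of interest in the paper) this is routine, but the general case may call for a short additional argument building on the exponential ergodicity \eqref{eq:ergodiciteQ_t}.
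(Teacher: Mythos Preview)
Your first two steps are essentially the paper's: the absorbing interval $[p_0,p_1]$ is obtained by the same sign analysis, and your log-ratio contraction is a close cousin of the paper's argument (the paper sets $z=\ln x$, computes $\partial_z\varphi_t(z)=\exp(-\int_0^t\beta_u e^{\varphi_u(z)}du)\leqslant e^{-p_0\int_0^t\beta_u du}$, and then translates back to $|\psi_t(x)-\psi_t(y)|\leqslant (p_1/p_0)e^{-\eta t}|x-y|$ with $\eta=p_0\inf_s a_{11}^s$).

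Where you diverge is step~3. You appeal to Feller plus Krylov--Bogolyubov for existence and then couple two copies \emph{sharing the same $\sigma$-path} with $\sigma(0)\sim\pi$. This gives uniqueness (any invariant measure has $\sigma$-marginal $\pi$, so one can indeed build such a coupling), but it does not by itself deliver convergence in law from an arbitrary deterministic initial condition $(x_0,s_0)$, which is part of the statement. You flag this yourself. The paper closes the gap more directly: it fixes the metric $d((x,s),(y,s'))=\1_{s\neq s'}+\1_{s=s'}|x-y|/(p_1-p_0)$ and proves a single Wasserstein contraction $\mathcal W_d(\delta_{(x,s)}P_t,\delta_{(y,s')}P_t)\leqslant Ce^{-\delta t}d((x,s),(y,s'))$ for \emph{all} pairs. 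For $s=s'$ this is your synchronous coupling; for $s\neq s'$ the paper runs the two $\sigma$-chains independently up to time $t/2$, uses the uniform ergodicity \eqref{eq:ergodiciteQ_t} to couple them with failure probability $\leqslant Ce^{-\gamma t/2}$, and on the success event applies the pathwise contraction on $[t/2,t]$. This yields existence, uniqueness and convergence in one stroke via completeness of $(\mathcal P(E),\mathcal W_d)$, and bypasses any Feller/tightness verification. Your route would work once you add exactly this two-stage coupling; without it the ``arbitrary initial $\sigma(0)$'' part remains unproven.
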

The stationary distribution $\mu_{\varepsilon}$ represents the population of species 1 at equilibrium in the absence of species 2. The invasion growth rate of species 2 when species 1 is at equilibrium is then given by
\begin{equation}
\label{eq:Lambday}
\Lambda_y(\varepsilon) = \int_{ \mathcal{M} \times \mathcal{S} } (a_{20}^s  + a_{21}^s x) \mu_{\varepsilon} (dx, ds)\,.
\end{equation}
Intuitively, we are interested in the sign of this quantity, since when the abundance of the second species is close to $0$, we have roughly
\[
\frac{1}{t}\ln( Y(t)) \simeq \frac{1}{t} \int_0^t \co  a^{\sigma(u/\varepsilon)}_{20} + a^{\sigma(u/\varepsilon)}_{21} X(u)\cf  du,
\]
where $X$ solves~\eqref{eq:Lotka1} and the right hand side converges to $\Lambda_y(\varepsilon)$ when $t$ goes to infinity, thanks to Lemma~\ref{lem:uniqueIPM} and the ergodic theorem. It is made rigorous in \cite{B18}  that, indeed, the sign of $\Lambda_y(\varepsilon)$ gives information on the local behaviour of~\eqref{eq:LV2dgeneral} near the boundary $\{y = 0\}$. Denoting by $\bar \alpha = \int_{\mathcal S} \alpha^s \pi(ds)$ the mean with respect to $\pi$ of a function $\alpha$ defined on $\mathcal{S}$,  it is proven in \cite{BL16}, in the specific case when $\sigma$ is a two-states Markov chain, that $\Lambda_y(\varepsilon)$ converges as $\varepsilon$ goes to 0 to $\bar a_{20} + \bar a_{21} \bar x$. The proposition below extends this result to the general case of a process $\sigma$ satisfying  Assumption~\ref{assu:settings} and with a first order expansion:
%Notice that, if $a_s^{10}$ is not bounded below by a positive constant (and in particular, if it can take negative values) or if $a_s^{11}$ is not bounded, then $0$ is accessible for the single species process $X$ solution to 

%%\frac{d X(t)}{dt} = X(t) \left( a_{10}^{\sigma(t/\varepsilon)}  - a_{11}^{\sigma(t/\varepsilon)} X(t) \right)
%\end{equation}
%%and thus $\mu^0 = \delta_0 \otimes \pi$ is a stationary distribution of $(X, \sigma)$. However, we are interested in a potential ergodic measure $\mu_{\varepsilon}$ which does not gives mass to $\{0\} \times \mathcal{S}$. Such a measure exists if and only if $\bar{a}_{10}> 0$. But, even though in that case $\bar F$ admits a unique positive equilibrium $\bar x$, Assumption~\ref{assu:fast} is not satisfied since $\mathcal{M}$ has to contain $0$. Therefore, we have to assume that $\inf_{s \in \mathcal{S}} a_{10}^s > 0$ and $\sup_{ s\in \mathcal{S}} a_{11}^s < + \infty$. 

\begin{prop}
Assume that  $\inf_{s \in \mathcal{S}} a_{10}^s > 0$ and $\sup_{ s\in \mathcal{S}} a_{11}^s < + \infty$. Then, for any invariant probability measure $\mu_{\varepsilon}$ of $(X_t, \sigma(t/\varepsilon))_{t \geq 0}$,
\[
\Lambda_y(\varepsilon) = \bar{a}_{20} + \bar{a}_{21}\bar x + \varepsilon \frac{\bar{a}_{21} \bar{a}_{10}^2}{\bar{a}_{11}}\int_{\mathcal{S}} \left( \frac{a_{10}^s}{\bar{a}_{10}} - \frac{a_{11}^s}{\bar{a}_{11}} \right) Q^{-1} \left(  \frac{a_{11}}{\bar{a}_{11}} - \frac{a_{21}}{\bar{a}_{21}}  \right)(s) \pi(ds) + \underset{\varepsilon\rightarrow 0}{o}(\varepsilon).
\]
\end{prop}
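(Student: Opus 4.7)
The plan is to apply Theorem~\ref{thm:expansion_mu} to the observable $f(x,s) = a_{20}^s + a_{21}^s x$, so that $\Lambda_y(\varepsilon) = \mu_\varepsilon f$ and $\mu_0 f = \pi f(\bar x) = \bar a_{20} + \bar a_{21} \bar x$, with $\bar x = \bar a_{10}/\bar a_{11}$ the attractor of the averaged logistic equation $\dot x = x(\bar a_{10} - \bar a_{11} x)$. The announced expansion then reduces to an explicit computation of the coefficient $c_1$ given by formula~\eqref{eq:c1}.

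First I would check that Theorem~\ref{thm:expansion_mu} indeed applies. Lemma~\ref{lem:uniqueIPM} confines the dynamics to the compact set $E=[p_0,p_1]\times\mathcal S$, and after smoothly extending $F_s$ outside $E$ (this leaves $\mu_\varepsilon f$ unchanged since $\mu_\varepsilon$ is supported on $E$) one obtains a system on a compact one-dimensional manifold satisfying Assumption~\ref{assu:settings}. For Assumption~\ref{assu:fast}, the averaged flow attracts $\bar x$ exponentially because $\bar F'(\bar x) = -\bar a_{10} < 0$, which in one dimension gives uniform exponential convergence of $\varphi_t$ together with all $x$-derivatives on any compact subset of $(0,\infty)$.

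Next, I would use Remark~\ref{rem:dim1} to write $h$ as a one-dimensional integral. Since $f$ is affine in $x$ we have $\pi f(y) - \pi f(\bar x) = \bar a_{21}(y - \bar x)$, and the cancellation against $\bar F(y) = \bar a_{11}\, y(\bar x - y)$ yields
\[
h(x,s) = -\frac{\bar a_{21}}{\bar a_{11}}\ln(x/\bar x),
\]
independent of $s$. A direct computation then shows that $L_c h - f$ is affine in $x$, with coefficients that are $\R$-linear combinations of $a_{10}, a_{11}, a_{20}, a_{21}$ as functions of $s$. Applying $Q^{-1}$, which acts only on $s$ and annihilates $s$-independent terms, and then differentiating in $x$ produces a function of $s$ alone, which one multiplies by
\[
F_s(\bar x) = \frac{\bar a_{10}^2}{\bar a_{11}}\left(\frac{a_{10}^s}{\bar a_{10}} - \frac{a_{11}^s}{\bar a_{11}}\right).
\]
Regrouping the $s$-dependent factors under $Q^{-1}$ as $a_{11}/\bar a_{11} - a_{21}/\bar a_{21}$ and integrating against $\pi$ yields exactly the stated formula for $c_1$.

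The main obstacle is the setup rather than the algebra: one must rigorously reduce the unbounded phase space to the compact manifold framework of Section~\ref{sec:proof_Markov} while preserving the identity $\Lambda_y(\varepsilon) = \mu_\varepsilon f$, and verify that the one-dimensional averaged logistic flow satisfies Assumption~\ref{assu:fast} with the required control on all $x$-derivatives. Once this reduction is in place, the computation is short because $f$ is affine and $\bar F$ is quadratic, so that $h$ is a simple logarithm and every downstream quantity remains a polynomial in the switched coefficients $a_{ij}^s$.
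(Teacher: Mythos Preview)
Your proposal is correct and follows essentially the same route as the paper: apply Theorem~\ref{thm:expansion_mu}, compute $h$ via the one-dimensional formula of Remark~\ref{rem:dim1} to get $h(x)=-\frac{\bar a_{21}}{\bar a_{11}}\ln(x/\bar x)$, then evaluate $c_1=\pi L_c Q^{-1}(L_c h - f)(\bar x)$ by differentiating in $x$ and multiplying by $F_s(\bar x)$. The only cosmetic difference is that the paper takes $f(x,s)=a_{21}^s x$ and handles $a_{20}^s$ separately via the observation that the $s$-marginal of $\mu_\varepsilon$ is $\pi$ for every $\varepsilon$, whereas you keep $a_{20}^s$ in $f$ and let it drop out after the $x$-derivative; both yield the same $h$ and the same $c_1$.
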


\begin{proof}
 
First, note that the unique equilibrium of $\bar F(x) = x(\bar{a}_{10} - \bar{a}_{11} x)$ is $\bar x := \bar{a}_{10}/\bar{a}_{11}$ and Assumption~\ref{assu:fast} holds (since thanks to Lemma~\ref{lem:uniqueIPM}, it is sufficient to consider initial conditions in the compact set $ [p_0, p_1]$).
By Theorem~\ref{thm:expansion_mu},  and since the marginal of $\mu_\varepsilon$ in $s$ is independent from $\varepsilon$ (so that $\mu_\varepsilon(a_{20}) = \bar{a}_{20}$), one has
\begin{equation}
\Lambda_y(\varepsilon) = \bar{a}_{20}   + \bar{a}_{21} \bar{x} + \varepsilon c_1 + \underset{\varepsilon\rightarrow 0}{o}(\varepsilon),
\end{equation}
with 
\[c_1 = \pi L_c Q^{-1} \po  L_c h - f  \pf (\bar x), \]
for $f(x,s) =  a^s_{21} x$. 
Using the formula for $h$ derived in Remark~\ref{rem:dim1}, we have
\begin{align*}
h(x,s) & = \int_{\bar x}^x \frac{\bar f(y) - \bar f(\bar x)}{\bar F(y)}dy\\
& = \int_{\bar x}^x   \bar{a}_{21} \frac{y- \bar x }{ y ( \bar{a}_{10} - \bar{a}_{11} y)} dy\\
& = \frac{\bar{a}_{21}  }{\bar{a}_{11}} \ln\left( \frac{\bar x}{x} \right),
\end{align*}
which yields
\[
L_c h(x,s) = -(a_{10}^s - a_{11}^s  x) \frac{  \bar{a}_{21}  }{\bar{a}_{11}}.
\]
Then, $\na_x(L_c h - f) = a_{11} \bar a_{21}/\bar a_{11} - a_{21} $, so that
\begin{align*}
    L_c Q^{-1} \po  L_c h -      f  \pf (\bar x,s) &= \bar x (a_{10}^s - a_{11}^s \bar x)Q^{-1}\po \frac{  \bar{a}_{21}  }{\bar{a}_{11}} a_{11} - a_{21}  \pf (s)  
\end{align*}
and finally, integrating this with respect to $\pi$ and using that $\bar x = \bar{a}_{10}/\bar{a}_{11}$,
\begin{equation}\label{eq:c1LotkaVolterre}
c_1 =   \frac{ \bar{a}_{21} \bar{a}_{10}^2}{\bar{a}_{11}}  \int_{\mathcal S} \po \frac{a_{10}^s}{\bar{a}_{10}} -  \frac{a^s_{11}}{\bar{a}_{11}}\pf Q^{-1}\po \frac{   a_{11}}{\bar{a}_{11}}    - \frac{a_{21}}{\bar{a}_{21}}  \pf (s) \pi(\dd s) \,,     
\end{equation}
which concludes.
\end{proof}

Let us discuss the sign of the expression \eqref{eq:c1LotkaVolterre} in some particular cases.

\begin{enumerate}
    \item If $a_{11}^s= \bar{a}_{11}$ and $a_{10}^s = \bar{a}_{10}$ for all $s$, then $\Lambda_{y}'(0) =0$. This is expected, since in that case, the process $X$ solution to~\eqref{eq:Lotka1} is deterministic and does not depend on $\varepsilon$, and $\mu_{\varepsilon} = \delta_{\frac{\bar{a}_{10}}{\bar{a}_{11}}} \otimes \pi$ for all $\varepsilon$. More interestingly, if $a_{11}^s = \bar{a}_{11}$ and $a_{21}^s = \bar{a}_{21}$ but $a_{10}$ is varying, we still have $\Lambda_y'(0) = 0$.
    \item If $a_{10}^s = \bar{a}_{10}$ and $a_{21}^s = \bar{a}_{21}$ for all $s$, using that $\pi Q^{-1} = Q^{-1} 1 = 0$, we get
    \begin{eqnarray*}
    c_1 &=&   - \frac{ \bar{a}_{21} \bar{a}_{10}^2}{\bar{a}_{11}}  \int_{\mathcal S}  \frac{a_{11}^s}{\bar{a}_{11}} Q^{-1}\po\frac{a_{11}}{\bar{a}_{11}}   \pf (s) \pi(\dd s) \\
    &=& - \frac{ \bar{a}_{21} \bar{a}_{10}^2}{\bar{a}_{11}^3}  \int_{\mathcal S}  (a_{11}^s -\bar{a}_{11}) Q^{-1}\po a_{11} - \bar{a}_{11}   \pf (s) \pi(\dd s)  \,. 
    \end{eqnarray*}
This is always non-negative, as it can be interpreted as an asymptotic variance. Indeed, considering $g\in\mathcal A$ with $\pi g=0$, for $t\geqslant0$,
\begin{eqnarray*}
t \mathbb E_{\pi} \left|\frac1t\int_0^t g(\sigma(s))\dd s \right|^2 & = & \frac2t \mathbb E_{\pi} \int_0^t \int_s^t g(\sigma(u))g(\sigma(s)) \dd s \dd u  \\
& = & \frac2t   \int_0^t \int_s^t \pi \po  g \mathcal Q_{u-s}g \pf \dd s \dd u \\
& =&  \frac2t   \int_0^t \int_0^t \1_{v<t-s}\pi \po  g \mathcal Q_{v}g \pf \dd s \dd v \\
& =&  \frac2t   \int_0^t   (t-v)\pi \po  g \mathcal Q_{v}g \pf   \dd v \\
&\underset{t\rightarrow \infty}\longrightarrow& - 2 \pi \po g Q^{-1} g\pf\,, 
\end{eqnarray*}
where we used that $\int_0^t \mathcal Q_v g $ converges to $-Q^{-1} g$ and bounded
\[\left|\int_0^t   v \pi \po  g \mathcal Q_{v}g \pf   \dd v\right| \leqslant  \int_0^{\infty} Cv e^{-\gamma v}\|g\|_\infty^2 \dd v \,.\]
\end{enumerate}

\begin{exemple}
We consider the two-state case, i.e.,  $\sigma$ is Markov chain on $\mathcal{S} = \{0,1\}$, with matrices rates given by 
\[
Q =  \begin{pmatrix}
- p & p \\
1 -p & -(1 - p)
\end{pmatrix}
\]
for some $p \in (0,1)$. The behaviour of the solutions to \eqref{eq:LV2dgeneral} was studied by Benaïm and Lobry in \cite{BL16}, through the signs of the invasion growth rates of species 1 and 2, in the competitive case (ie, when $a_{21}^s$ and $a_{12}^s$ are negative). Their study was complemented by Malrieu and Zitt in \cite{MZ17}, who proposed an alternative formula for the invasion growth rate which made possible for them to understand the monotonicity of $\varepsilon \mapsto \Lambda_y( \varepsilon)$. In particular, it is a consequence of the proof of Lemma 4.1 in \cite{MZ17} that $\varepsilon \mapsto \Lambda_y(\varepsilon)$ is increasing if $A_2 > 0$ while it is decreasing if $A_2 < 0$, where $A_2$ is the coefficient of the second order term of the polynomial 
\[
P(x) = \left[ \frac{a_{20}^1}{a_{10}^1}(1 +  \frac{a_{21}^1}{a_{20}^1} x)( 1 - \frac{a_{11}^0}{a_{10}^0} x) - \frac{a_{20}^0}{a_{10}^0}(1 +  \frac{a_{21}^0}{a_{20}^0} x)( 1 - \frac{a_{11}^1}{a_{10}^1} x) \right] \frac{\frac{a_{11}^1}{a_{10}^1} - \frac{a_{11}^0}{a_{10}^0}}{| \frac{a_{11}^1}{a_{10}^1} - \frac{a_{11}^0}{a_{10}^0}|},
\]
that is $A_2 = (a_{11}^1 a_{21}^0 - a_{11}^0a_{21}^1)/(a_{10}^0a_{10}^1)$, if we assume without loss of generality that $\frac{a_{11}^1  }{a_{10}^1} \geqslant \frac{a_{11}^0}{a_{10}^0}$. Let us prove that our results are in accordance with the results of Malrieu and Zitt, by studying the sign of the first order term of the expansion of  $\varepsilon \mapsto \Lambda_y( \varepsilon)$ when $ \varepsilon \to 0$. Using Example~\ref{exemple:Q=matrix}, we have $\pi=(1-p,p)$, $Q^{-1}= Q$ and $\pi_s Q_{s,s'}= p(1-p)$ if $s\neq s'$ and $-p(1-p)$ if $s=s'$, from which we compute
\begin{align*}
    c_1 &= \sum_{s,s'\in{0,1}} \pi_s Q_{s,s'} \po \frac{a_{10}^s}{\bar{a}_{10}} -  \frac{a_{11}^s}{\bar{a}_{11}}\pf  \po  \frac{   a_{11}^{s'}}{\bar{a}_{11}} -  \frac{a_{21}^{s'}}{\bar{a}_{21}}   \pf \\
    & = p(1-p) \Big[ \po \frac{a_{10}^0}{\bar{a}_{10}} -  \frac{a_{11}^0}{\bar{a}_{11}}\pf  \po   \frac{a_{11}^1}{\bar{a}_{11}} - \frac{   a_{21}^1}{\bar{a}_{21}} \pf
    + \po \frac{a_{10}^1}{\bar{a}_{10}} -  \frac{a_{11}^1}{\bar{a}_{11}}\pf  \po \frac{a_{11}^{0}}{\bar{a}_{11}} - \frac{   a_{21}^{0}}{\bar{a}_{21}}   \pf \\
    & \qquad - \po \frac{a_{10}^0}{\bar{a}_{10}} -  \frac{a_{11}^0}{\bar{a}_{11}}\pf  \po  \frac{a_{11}^{0}}{\bar{a}_{11}} - \frac{   a_{21}^{0}}{\bar{a}_{21}} \pf - \po \frac{a_{10}^1}{\bar{a}_{10}} -  \frac{a_{11}^1}{\bar{a}_{11}}\pf  \po  \frac{a_{11}^{1}}{\bar{a}_{11}} - \frac{   a_{21}^{1}}{\bar{a}_{21}}  \pf\Big]\\
    & = p(1-p)  \po \frac{a_{10}^0}{\bar{a}_{10}} -  \frac{a_{11}^0}{\bar{a}_{11}} - \po\frac{a_{10}^1}{\bar{a}_{10}} -  \frac{a_{11}^1}{\bar{a}_{11}}\pf \pf     \po  \frac{a_{11}^{1}}{\bar{a}_{11}} - \frac{   a_{21}^{1}}{\bar{a}_{21}}  - \po \frac{a_{11}^{0}}{\bar{a}_{11}} - \frac{   a_{21}^{0}}{\bar{a}_{21}}  \pf \pf\\
    & = p(1-p)  \po \frac{a_{10}^0-a_{10}^1}{\bar{a}_{10}} -  \frac{a_{11}^0-a_{11}^1}{\bar{a}_{11}} \pf     \po \frac{a_{11}^1-a_{11}^0}{\bar{a}_{11}} -  \frac{a_{21}^1-a_{21}^0}{\bar{a}_{21}} \pf\\
    & = \frac{p(1-p)a_{10}^0 a_{10}^1 }{\bar{a}_{10} (\bar{a}_{11})^2 \bar{a}_{21} } \left( \frac{a_{11}^1}{a_{10}^1} - \frac{a_{11}^0}{a_{10}^0} \right) \left(a_{11}^0 a_{21}^1 - a_{11}^1 a_{21}^0\right) \,.
\end{align*}
Since we have assumed that $\frac{a_{11}^1  }{a_{10}^1} \geqslant \frac{a_{11}^0}{a_{10}^0}$,  the sign of $\Lambda_y'(0)$ is indeed  the same as that of $a_{11}^1 a_{21}^0 - a_{11}^0 a_{21}^1$ (recall that $\bar{a}_{21} < 0$). 
\end{exemple}

\begin{exemple}
We now consider the case described in Example~\ref{exemple:Q=pi-I}, where $Qf = Q^{-1} f = \pi f - f$. In that case, we get 
\[
Q^{-1} \left( \frac{a_{11}}{\bar{a}_{11}} -  \frac{   a_{21}}{\bar{a}_{21}}   \right)(s) = \frac{a^s_{21}}{\bar{a}_{21}} -  \frac{   a^s_{11}}{\bar{a}_{11}}   
\]
which entails
\[
    c_1 = \frac{\overline{a_{10} a_{21}}}{\bar{a}_{10} \bar{a}_{21}} + \frac{\overline{a_{11}^2}}{(\bar{a}_{11})^2} - \frac{\overline{a_{10} a_{11}}}{\bar{a}_{10} \bar{a}_{11}} -\frac{\overline{a_{11} a_{21}}}{\bar{a}_{11} \bar{a}_{21}} \,.
\]
\end{exemple}

 \section{Annex}
 \subsection{Proof of Proposition~\ref{prop:dev-semigroup}}
 
 \begin{proof}
 This is essentially the proof of \cite{PTW2012}, in dual form since we work with $P_t f$ rather than the law of the process. It is organized in three steps.  First, assuming formally that \eqref{eq:dev-pt} gives an expansion in $\varepsilon$ of $P_t^\varepsilon$, we deduce the expression of $P_t^{(k)}$ and $S_t^{(k)}$. With these definitions, in a second step, we establish  the bounds \eqref{eq:boundsP} and \eqref{eq:boundsS} on $P_t^{(k)}$ and $S_t^{(k)}$. Finally, from this, we obtain the bound \eqref{eq:boundsR} on $R_{n,t}^\varepsilon$.

\textbf{Step 1.} 
Fix $f \in \mathcal{C}^{\infty}(\mathcal M\times \mathcal S)$.  For all $t \geq 0$,
\begin{equation}
\label{eq:kolmo}
\partial_t P_t^{\varepsilon} f = L^{\varepsilon} P_t^{\varepsilon} f.
\end{equation}
Having in mind the ansatz  that \eqref{eq:dev-pt} gives an expansion in $\varepsilon$ of $P_t^\varepsilon f$, developing in $\varepsilon$ each side of the above equality, and equating the terms of the same order (treating separately the terms $P_t^{(k)} f $ and $S_t^{(k)}f $), we end up with the following equations: for all $k \geq 0$,  
\begin{equation}
\label{eq:1/eps}
Q P_t^{(0)}f  = 0 ,
\end{equation}

\begin{equation}
\label{eq:order-k}
\partial_t P_t^{(k)}f  = L_c \left(P_t^{(k)}f\right)  + Q P_t^{(k+1)}f 
\end{equation}
and

\begin{equation}
\label{eq:layer-order-0}
\partial_t S_t^{(0)} f = Q S_t^{(0)}f,
\end{equation}

\begin{equation}
\label{eq:layer-order-k}
\partial_t S_t^{(k)} f = Q S_t^{(k)} f + L_c(S_t^{(k-1)}f).
\end{equation}
To shorten the notation, for  $k \in \N$, we write $u_k(t,x,s)=P_t^{(k)}f(x,s)$. Moreover, for a function $g(t,x,s)$, we use the notation $\pi g(t,x) = \int_{\mathcal S} g(t,x,s)\pi(\dd s)$.   Equation \eqref{eq:1/eps} is equivalent to say that there exists $\theta_{0}(t,x)$ such that for all $s\in\mathcal S$, $u_0(t,x,s) = \theta_{0}(t,x)$. Now, if we want Equation \eqref{eq:order-k} for $k = 0$ to have a solution $u_1$, this will induce a constraint, fixing the value of $\theta_{0}$. Indeed, this equation can be rewritten as \begin{equation}
\label{eq:Q-1}
Q u_{1} =    \partial_t u_{0} - L_c u_{0}.
\end{equation}
The left hand side averages to $0$ with respect to $\pi$ (for all fixed $x\in\mathcal M$ $t\geqslant 0$), and thus we have to impose that
\[
\pi \left( \partial_t  u_{0} - L_c u_{0} \right) = 0.
\]
This is equivalent to
\begin{equation}
\label{eq:transport0}
\partial_t \theta_{0}(t,x) -  \bar  F(x) \cdot\na  \theta_{0}(t,x)  = 0,
\end{equation}
which is a transport equation  with solution  given by
\[
 \theta_{0}(t,x) = \theta_{0}(0,\varphi_t(x)).
\]
In order to identify a suitable initial condition $\theta_{0}(0,x)= \pi u_0(0,x)$, we formally let $\varepsilon$ and then $t$ go to $0$ in 
 \[
\pi P_t^\varepsilon f(x)  = \pi f(x) + \int_0^t \pi L_c P_r^\varepsilon f( x) dr,
\]
which is obtained by integrating \eqref{eq:kolmo}   and using that $\pi Q = 0$. 
This yields $\theta_0(0,x) = \pi f(x)$ and as announced, 
\begin{equation}
\label{eq:formula-u0}
P_t^{(0)}f(x,s) = u_{0}(t,x,s)  = \pi f( \varphi_t(x)).
\end{equation}

Now, reasoning by induction, we assume that for some $k \geq 1$, we have constructed $u_{k-1}$ satisfying $\pi \left( \partial_t  u_{k-1} - L_c u_{k-1} \right) = 0
$, from which we want to define $u_k$. Recalling the definition \eqref{eq:Q-1} of the pseudo-inverse $Q^{-1}$, Equation \eqref{eq:order-k} is then equivalent to the existence of a function $\theta_{k} : \R_+ \times \R^d \to \R$ such that
\begin{equation}
\label{eq:uk}
u_{k}(t,x,s) = Q^{-1}( \partial_t u_{k-1} - L_c u_{k-1})(t,x,s)  + \theta_{k}(t,x)   ,
\end{equation}
Once again, to find a suitable $\theta_{k}$, we use Equation \eqref{eq:order-k} at order $k+1$ that we rewrite as
\begin{equation}
\label{eq:Q-2}
Q u_{k+1} = \partial_t u_k -  L_c u_k.
\end{equation}
As before, for fixed $(t,x)$, since the left-hand side averages to $0$ with respect to $\pi$, we impose
\begin{equation}
\label{eq:pideltau}
 \pi \left( \partial_t  u_k - L_c u_k \right)(t,x) = 0.
\end{equation}
Notice that $ \pi Q^{-1} = 0$, so that, averaging Equation \eqref{eq:uk} with respect to $\pi$ and differentiating with respect to time, 
\begin{equation}
\label{eq:pideltau2}
\pi \partial_t  u_k  = \partial_t \theta_{k}.
\end{equation}
Injecting Equations \eqref{eq:pideltau2} and \eqref{eq:uk} in \eqref{eq:pideltau} yields
\begin{equation}
\label{eq:transport1}
\partial_t \theta_{k}(t,x)  - \bar  F(x) \cdot \nabla \theta_{k}(t,x)  =  \pi
L_c \bb_k(t,x),
\end{equation}
where we set 
\[
\bb_k = Q^{-1} ( \partial_t u_{k-1} - L_c u_{k-1}).
\]
%and
%\begin{align}
%b_t(x,i) & = (\bb_t(x))_i\\
%& = \sum_j Q^g_{i,j} [  \na \bar f(\varphi_t(x)) \cdot \bar F( \varphi_t(x)) -  F_j(x) \cdot D\varphi_t(x) \na \bar f( \varphi_t(x)) ].
%\end{align}
This is again a transport equation, similar to \eqref{eq:transport0}, but with a source term. The solution is given by
\begin{equation} \label{eq:thetaktheta0}
\theta_k(t,x) = \theta_k(0,\varphi_t(x)) + \int_0^t  \pi  L_c \bb_k(r,\varphi_{t-r}(x)) dr.
\end{equation}
For now, by construction, for any choice of the function $\theta_k(0,\cdot)$, the function $u_k$  defined by \eqref{eq:uk} where $\theta_k$ is given by \eqref{eq:thetaktheta0} satisfies \eqref{eq:order-k} and \eqref{eq:pideltau}. It remains to specify a suitable initial condition $\theta_k(0,x)$. To do so, we need first to look at the so-called boundary layer correction terms $S^{(j)}$. 

Set $v_k(t,x,s)=S_t^{(k)}f(x,s)$. Then, Equations~\eqref{eq:layer-order-0} and \eqref{eq:layer-order-k} read
\begin{equation}
\label{eq:v-order-0}
\partial_t v_0 = Q v_0
\end{equation}
and
\begin{equation}
\label{eq:v-order-1}
\partial_t v_k = Q v_k + L_c v_{k-1}.
\end{equation}
Equation \eqref{eq:v-order-0} is simply solved as  
\[
 v_0(t,x,s) = \mathcal Q_t  v_0(0,x,s) .
\]
Moreover, taking $t = 0$ and letting $\varepsilon \to 0$ in \eqref{eq:dev-pt} yields $v_0(0,x,s) + u_0(0,x,s) = f(x,s)$, hence we set
\begin{equation}
\label{eq:formula-v0}
v_0(t,x,s) = \mathcal Q_t   f(x,s) -   \pi f(x)  ,
\end{equation}
which indeed solves \eqref{eq:v-order-0}. Next, assuming by induction that $v_{k-1}$ has been defined for some $k \geq 1$, we can solve equation \eqref{eq:v-order-1} to find 
\[
v_k(t,x,s) = \mathcal Q_t v_k(0,x,s) + \int_0^t \mathcal Q_{t-r} L_c v_{k-1}(r,x,s) dr.
\]
It remains to chose a suitable initial condition $ v_k(0,x)$, which will be determined by the requirement of the long-time decay  \eqref{eq:boundsS}. Indeed, since  $\lim_{t \to \infty} \mathcal Q_t=   \pi$,   \eqref{eq:boundsS} can only hold if
\[
0 = \pi\left(  v_k(0,x) + \int_0^{\infty} \mathcal Q_r L_c v_{k-1}(r,x) dr \right)
\]
and therefore,
\begin{equation}
\label{eq:formula_v_k0}
\pi v_k(0,x) = -  \int_0^{\infty} \pi \mathcal Q_r L_c v_{k-1}(r,x) dr = -  \int_0^{\infty}  \pi L_c v_{k-1}(r,x) dr.
\end{equation}
Besides, from  \eqref{eq:dev-pt} applied with $t=0$, we get that $v_k(0,x,s) + u_k(0,x,s)=0$ for all $k\geq 1$ and, integrating \eqref{eq:uk} at  $t=0$ with respect to $\pi$, $\theta_k(0,x) = \pi u_{k}(0,x) $. Hence, the only choice of $\theta_k(0,\cdot)$ that is compatible with \eqref{eq:boundsS} is
\[
\theta_k(0,x) = \int_0^{\infty}  \pi L_c v_{k-1  }(r,x) dr.
\]
Finally, $v_k(0,x,s) = - u_k(0,x,s)$ is determined using again \eqref{eq:uk} at $t=0$.
%\[v_k(0,x) = - u_k(0,x) = - Q^g ( \partial_t u_{k-1} - L_c u_{k-1})(0,x) + \theta_{k} \mathbf{1} \]
At this point, we have completely determined a definition of $P_t^{(k)}$ and $S_t^{(k)}$, which  is the one that we use in the rest on the proof. It is such that \eqref{eq:1/eps}, \eqref{eq:order-k}, \eqref{eq:layer-order-0} and \eqref{eq:layer-order-k} hold.

\textbf{Step 2} We prove that bounds \eqref{eq:boundsP} and \eqref{eq:boundsS} on $P_t^{(k)}$ and $S_t^{(k)}$  hold true.
We work by induction on $k$, starting with $k=0$. By compactness, recalling the definition \eqref{eq:formula-u0} of $u_0$, it is straightforward that for all $T > 0$, there exists $C_0(T,j ,p)$ such that for all $j   \geq 0$ and all $f \in \mathcal{C}^{\infty}(\mathcal M\times\mathcal S)$,
\[
\sup_{t \in [0,T]}\norm{ \partial_t^p u_0 }_{j }  \leq C_0(T,j ,p) \norm{f }_{j+p }.
\]  
Let $j \geq 0$, $\alpha \in \cco 1, N \ccf^j$ and $p \in \N$. Then, by Equation \eqref{eq:formula-v0}, 
\[
  \partial^{\alpha}_x v_0 =   \mathcal Q_t ( \partial^{\alpha}_x  f - \pi ( \partial^{\alpha}_x  f) ).
\]
Using  the ergodicity condition \eqref{eq:ergodiciteQ_t}, we get
 for all $f \in  \mathcal{C}^{\infty}$, $t \geq 0$ and $j \in\N$,
\[
\norm{  v_0(t, \cdot) }_{j } \leq C e^{ - \gamma  t} \norm{  f }_{j}.
\]

For $k\geq 1$, reasoning by induction, we assume that there exist constants $C_{k-1}(T,j ,p)$ and $C_{k-1}(j )$ such that 
\[
\sup_{t \in [0,T]} \norm{ \partial_t^p u_{k-1}(t,\cdot) }_{j }  \leq  C_{k-1}(T,j ,p) \norm{f}_{j+p+2(k-1)   }
\]
and for all $t \geq 0$,
\[
 \norm{  v_{k-1}(t,\cdot) }_{j }  \leq  C_{k-1}(j ) e^{ - \gamma   t} (1 + t^{k-1}) \norm{f}_{j+2(k-1)} .
\]
Since $s,x\mapsto F_{s}(x)$ and its derivatives in $x$ are uniformly bounded over $\mathcal M\times\mathcal S$, it is straightforward to check that that for any $j, p  \in \N$, and $T > 0$, there exist $C(j )$ and $C(T,j,p)$ such that for any $g \in \mathcal{C}^{\infty}(\mathcal M)$,
\begin{eqnarray*}
\norm{ L_c g }_{j } & \leq & C(j  )  \norm{ g }_{j+1  } \\
\sup_{t \in [0,T]} \norm{\partial_t^p ( g \circ \varphi_t) }_{j } & \leq & C(T,j,p) \norm{g}_{j+p },
\end{eqnarray*}
which we use extensively in the rest of this step. In the following, $T,p,j ,k$ are fixed, and we denote by $C$ a constant which  may change from line to line and depends only on $T,p,j ,k$. First, for all $t \leq T$
\begin{align*}
\norm{ \partial_t^p (\theta_k(0, \cdot) \circ \varphi_t) }_{j } & \leq C  \norm{ \theta_k(0, \cdot) }_{j+p}  \\
& \leq C  \int_0^{\infty} \norm{L_c v_{k-1}(r, \cdot)}_{j+p} dr\\
& \leq C  \int_0^{\infty} \norm{ v_{k-1}(r, \cdot)}_{j+p+1} dr\\
& \leq C  \norm{f}_{j+p+1+2(k-1)}
\end{align*}
  by induction.  Next, using that $\partial_t^p \bb_k = Q^{-1}( \partial_t^{p+1} u_{k-1} - L_c(\partial_t^p u_{k-1}))$, we get that for all $t \leq T$ 
\begin{align*}
\norm{ \partial_t^p \bb_k }_{j }  & \leq C \left( \norm{\partial_t^{p+1} u_{k-1}}_{j } + \norm{L_c(\partial_t^p u_{k-1})}_{j } \right)\\
&\leq  C\left( \norm{\partial_t^{p+1} u_{k-1}}_{j } +  \norm{\partial_t^p u_{k-1}}_{j+1 }\right)\\
& \leq C  \norm{f}_{j+p+1+2(k-1)   }\,,
\end{align*}
  by induction. Now, from  \eqref{eq:thetaktheta0}, for all $t\leq T$,
\begin{align*}
\norm{\theta_k(t, \cdot)  }_{j } &\leq  \norm{\theta_k(0,\cdot)\circ  \varphi_t}_{j} + \int_0^t \norm{L_c \bb_k(s,\cdot) \circ \varphi_{t-s}}_{j} ds \\
 &\leq  C \norm{\theta_k(0,\cdot)}_{j} + C \int_0^t \norm{ \bb_k(s,\cdot) }_{j+1} ds \\
&   \leq C \norm{f}_{j+2+2(k-1)}.
\end{align*}
To bound the derivatives in time of $ \theta_k$, instead of using  \eqref{eq:thetaktheta0}, it is simpler to work with \eqref{eq:transport1}, which gives for $p\geq 1$
\begin{align*}
 \norm{\partial_t^{p}  \theta_k(t,.) }_{j } &\leqslant \norm{\pi L_c \partial_t^{p-1} \theta_k(t, \cdot)}_{j} + \norm{\pi L_c \partial_t^{p-1} \bb_k(t, \cdot)}_{j}   \\
 &  \leq  C \norm{  \partial_t^{p-1} \theta_k(t, \cdot)}_{j+1} +C  \norm{  \partial_t^{p-1} \bb_k(t, \cdot)}_{j+1}  \\
 &  \leq  C \norm{  \partial_t^{p-1} \theta_k(t, \cdot)}_{j+1} +C  \norm{ f}_{j+p+1+2(k-1)}.
\end{align*}
Reasoning by induction on $p$ yields $ \norm{\partial_t^{p}  \theta_k(t,.) }_{j } \leq C \norm{f}_{j+p+2+2(k-1)} $. Gathering the previous bounds in \eqref{eq:uk} yields
\[\norm{\partial_t^pu_k(t,\cdot)}_{j } \leq   \norm{\partial_t^{p}  \theta_k(t,.) }_{j,0} + \norm{ \partial_t^p \bb_k(t,\cdot) }_{j } \leq C  \norm{f}_{j+p+2k  }.  \]
This concludes the study by induction of $u_k$. We now turn to the study of $ v_k$. Using Equation \eqref{eq:formula_v_k0} and omitting the dependency in $x,s$ to alleviate notations, we decompose $v_k$ as 
\begin{eqnarray*}
v_k(t) &=& \mathcal Q_t v_k(0) + \int_0^t \mathcal Q_{t-r} L_c v_{k-1}(r) dr \\ 
& = & \mathcal Q_t ( v_k(0) - \pi v_k(0)) + \int_0^t \mathcal Q_{t-r}( L_c v_{k-1}(r) - \pi L_cv_{k-1}(r) ) \\
& & - \int_t^{\infty} \pi L_c v_{k-1}(r)dr.
\end{eqnarray*}
%\[
%v_k(t) =e^{tQ} ( v_k(0) - \pi v_k(0) \mathbf{1}) + \int_0^t e^{ (t-s)Q} ( L_c v_{k-1}(s) - \pi L_cv_{k-1}(s) \mathbf{1}) - \int_t^{\infty} \pi L_c v_{k-1}(s)ds.
%\]
We have
\[
\norm{ \mathcal Q_t ( v_k(0) - \pi v_k(0)  )}_{j } \leq C  e^{ - \gamma   t} \norm{ v_k(0) }_ {j }  \leq C e^{ - \gamma   t} \norm{ f }_{j+2k }, 
\]
where we used that $v_k(0) = - u_k(0)$ and the previous bound on $u_k(0)$. Similarly,
\begin{eqnarray*}
\norm{ \int_0^t \mathcal Q_{t-r} ( L_c v_{k-1}(r) - \pi L_cv_{k-1}(r) )dr }_{j } 
&\leq & C \int_0^t e^{ - \gamma   (t-r)} \norm{ L_c v_{k-1}(r)}_{j }  dr\\
&\leq & C \int_0^t e^{ - \gamma   (t-r)} \norm{v_{k-1}(r) }_{j+1  } dr\\
& \leq & C e^{ - \gamma   t} (1+ t^k )\norm{ f }_{j+1+2(k-1) }, 
\end{eqnarray*}
where we have used the induction hypothesis on $v_{k-1}$. Finally, 
\begin{align*}
\int_t^{\infty}  \norm{ L_c v_{k-1}(r) }_{j } dr & \leq C  \int_t^{\infty}  \norm{ v_{k-1}(r) }_{j+1 } dr\\
 & \leq C  \int_t^{\infty}  (1 + r^{k-1}) e^{ - \gamma  r} dr \norm{f}_{j+1+2(k-1) }\\
 & \leq C( 1 + t^{k-1}) e^{ - \gamma   t} \norm{f}_{j+1+2(k-1)  }.
\end{align*}
This proves the desired bounds on $v_k$. In particular, at this stage, we have proven \eqref{eq:boundsP} and \eqref{eq:boundsS}.

\textbf{Step 3.} We now bound the remainder $R_{n,t}^{\varepsilon}$  given by   \eqref{eq:dev-pt} for a given $n\geqslant 1$. Introducing $\eta_t = (\partial_t - L_\varepsilon) R_{n,t}^\varepsilon f$ for a given $f\in\mathcal C^\infty$, we see that, thanks to \eqref{eq:1/eps}, \eqref{eq:order-k}, \eqref{eq:layer-order-0} and \eqref{eq:layer-order-k}, all the low order terms in $\varepsilon$ vanishes (as designed) so that
\begin{align*}
\eta_t &=   (\partial_t - L_\varepsilon)  \po P_t^{\varepsilon} f - \sum_{k=0}^n \varepsilon^k P_t^{(k)} f +  \sum_{k=0}^n \varepsilon^k S_{\frac{t}{\varepsilon}}^{(k)}f\pf  \\
&= \varepsilon^n \po L_c P_t^{(n)} f - \partial_t P_t^{(n)} f + L_c S_{t/\varepsilon}^{(n)} f\pf . 
\end{align*}
The bounds obtained in the previous step yield
 \[\sup_{t\in [0,T]} \norm{\eta_t}_{j }   \leq C \varepsilon^n \norm{f}_{j+1+2n }\]
 for some $C$ depending only on $T,j,n$. Interpreting $R_{n,t}^\varepsilon f$ as the solution of the equation $\partial_t R_{n,t}^\varepsilon f = L_\varepsilon R_{n,t}^\varepsilon f + \eta_t$ with $R_{n,0}^ \varepsilon f=0$ yields the Feynman-Kac representation
 \[R_{n,t}f(x,s) = \mathbb E_{x,s} \po  \int_0^t \eta_r(X(r),\sigma(r)) dr \pf, \]
see \cite[Theorem 6.3]{Davis}. The bounds on $\eta_t$ immediately yields
  \[\sup_{t\in [0,T]} \|R_{n,t}^\varepsilon f\|_\infty  \leq C \varepsilon^n \|f\|_{1+2n}\]
 for some $C$ depending only on $T,n$.
 
 Now,  to get similar bounds for the derivatives in space of $R_{n,t}^\varepsilon f$, we work by induction on $j$, assuming that we have already obtained that
 \begin{equation}
     \label{eq:inductionRnt}
     \sup_{t\in [0,T]} \norm{ R_{n,t}^\varepsilon f}_{j'}  \leq C \varepsilon^n \norm{ f}_{j'+1+2n}
 \end{equation}
 for all $j'<j$ for some $j\geq 1$ and some $C>0$.  Indeed, then, considering a multi-index $\alpha \in \cco 1,d\ccf^j$, differentiating the equation $\partial_t R_{n,t}^\varepsilon = L_\varepsilon R_{n,t}^\varepsilon f + \eta_t$ yields 
 \begin{equation}
 \label{eq:dtdalphaR}
 \partial_t \partial_x^\alpha R_{n,t}^\varepsilon f = L_\varepsilon \partial_x^\alpha R_{n,t}^\varepsilon f +  \partial_x^\alpha \eta_t + \sum_{|\alpha'|\leq j} B_{\alpha'} \partial_x^{\alpha'} R_{n,t}^\varepsilon f  ,    
 \end{equation}
  where, for all multi-index $\alpha'$,  $B_{\alpha'}(x,s)$ is a matrix whose coefficients are some derivatives in $x$ of $F_s$. In other words, considering a vector $G_t$ whose coefficients are all the partial derivatives in $x$ of $ R_{n,t}^\varepsilon f$ of order $j$, then $G_t$ solves an equation of the form
  \begin{equation}
      \label{eq:FeynmanKacG}
      \partial_t G_t = L_\varepsilon G_t + B' G_t + \eta_t',
  \end{equation}
 where $L_\varepsilon$ acts coefficient-wise on $G_t$, the coefficients of the matrix $B'(x,s)$ are derivatives in $x$ of $F_s$ and $\eta_t'$ gathers $\partial_x^\alpha \eta_t$ and all the terms in \eqref{eq:dtdalphaR} involving derivatives of $R_{n,t}^\varepsilon f$ of order $|\alpha'|<j$. In particular, thanks to the bounds \eqref{eq:boundsP} and \eqref{eq:boundsS} established in step 2 and to the induction assumption, we have
 \[\sup_{t\in [0,T]} \norm{\eta_t'}_{\infty}  \leq C \varepsilon^n \norm{f}_{j+1+2n}\]
 for some $C>0$ depending on $T,n,j$.  We use again the Feynman-Kac formula for $G_t$ based on Equation \eqref{eq:FeynmanKacG}, which reads
 \[G_t(x,s) = \mathbb E_{x,s} \po \int_0^t   E_r \eta_{t-r}'(X(r),\sigma(r))dr   \pf \]
 where $(E_t)_{t\geqslant 0}$ is the solution of the matrix-valued ODE $\partial_t E_t = E_tB'(X(t),\sigma(t))$ (for completeness, we provide a proof of this formula in appendix). Since $B'$ is uniformly bounded, so it $E_r$ over $r\in[0,T]$, and thus
 \[\sup_{t\in [0,T]} \norm{G_t}_{\infty}  \leq C \varepsilon^n \norm{f}_{j+1+2n}\]
  for some $C>0$ depending on $T,n,j$. Since this holds for all $\alpha \in\cco 1,d\ccf^{j}$,  this concludes the proof by induction of \eqref{eq:inductionRnt} for all $j'$. To conclude, using that
 \[R_{n,t}^\varepsilon f = R_{n+1,t}^\varepsilon f + \varepsilon^{n+1}P_t^{(n+1)} f + \varepsilon^{n+1}S_{\frac t\varepsilon}^{(n+1)} f,\]
 we get that, for all $j\geq 0$,
 \begin{align*}
     \norm{  R_{n,t}^\varepsilon f }_j &\leq  \norm{ R_{n+1,t}^\varepsilon f}_j + \varepsilon^{n+1} \norm{ P_t^{(n+1)} f}_j + \varepsilon^{n+1} \norm{S_{\frac t\varepsilon}^{(n+1)} f}_j\\
     & \leq C \varepsilon^{n+1}\norm{f}_{j+3+2n},
 \end{align*}
 where we used  \eqref{eq:boundsP}, \eqref{eq:boundsS}
 and \eqref{eq:inductionRnt} (for $n+1$). This concludes the proof of \eqref{eq:boundsR}, and thus the proof of Proposition~\ref{prop:dev-semigroup}.

\end{proof}

\subsection{Feynman Kac formula}

\begin{lem}\label{lem:FeynmanKac}
Let $L$ be  a Markov generator on a set $\mathcal S'$. For $n\geqslant 1$,
consider $f_t(z) =(f_{1,t}(z),\dots, f_{n,t}(z)) \in \R^n$  for $z\in \mathcal S'$, $t\geqslant 0$ solution of
  \begin{equation}
      \partial_t f_t(z) = L f_t(z) + A_t(z) f_t(z) + c_t(z),\quad z\in\mathcal S',\ t\geqslant 0\,,
  \end{equation}
  where $(t,z)\mapsto A$ is a bounded $n\times n$ matrix field, $(t,z)\mapsto c$ is a bounded vector field and $L  f_t$ is to be understood component-wise.   Then, for all $t\geqslant 0$ and $z\in\mathcal S'$,
  \[f_t(z) =   \mathbb E \po E_t f_{0}(Z_t) \pf + \int_0^t \mathbb E \po E_r c_{t-r}(Z_r)   \pf \dd r\,,\]
  where $(Z_t)_{t\geqslant 0}$ is a Markov process associated to $L$ with $Z_0=z$ and $(E_r)_{r\in[0,t]}$ is the solution  of the $n\times n$ matrix-valued random ODE $\partial_r E_r = E_r A_{t-r}(Z_r) $ with $E_0=I_n$, provided this is well-defined almost surely.
\end{lem}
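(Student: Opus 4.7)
The plan is the classical martingale approach to Feynman--Kac. Fix $t > 0$ and introduce, for $r \in [0,t]$,
\[
M_r := E_r f_{t-r}(Z_r) + \int_0^r E_u\, c_{t-u}(Z_u)\, du.
\]
Since $E_0 = I_n$ and $Z_0 = z$, we have $M_0 = f_t(z)$, while $M_t = E_t f_0(Z_t) + \int_0^t E_u\, c_{t-u}(Z_u)\, du$. The identity to prove is therefore exactly $\mathbb E[M_0] = \mathbb E[M_t]$, and it suffices to show that $(M_r)_{r\in[0,t]}$ is a true martingale with respect to the natural filtration of $Z$.

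To establish the martingale property, I would view $(Z_r, E_r)_{r\in[0,t]}$ as a (time-inhomogeneous) Markov process on $\mathcal S' \times M_n(\R)$, whose generator acts as $L$ on the first coordinate and as the derivation $E \mapsto E\, A_{t-r}(z)$ on the second. Applying Dynkin's formula to the time-dependent function $\Phi_r(z,E) := E f_{t-r}(z)$ gives
\[
d\bigl(E_r f_{t-r}(Z_r)\bigr) \;=\; E_r\bigl[A_{t-r}(Z_r) f_{t-r}(Z_r) + L f_{t-r}(Z_r) - (\partial_s f_s)_{s=t-r}(Z_r)\bigr]\, dr + dN_r,
\]
where $(N_r)$ is a local martingale accounting for the fluctuations of $Z$. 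Substituting the PDE $\partial_s f_s = L f_s + A_s f_s + c_s$, the $L$-terms and the $A$-terms cancel and the bracket collapses to $-E_r c_{t-r}(Z_r)$, which is exactly cancelled by the derivative of the integral term in $M_r$. Hence $dM_r = dN_r$, so $M$ is a local martingale.

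The main (mild) obstacle is upgrading ``local martingale'' to ``true martingale'' so that $\mathbb E[M_t] = \mathbb E[M_0]$ is legitimate. Since $A$ is bounded, Grönwall's lemma furnishes the pathwise deterministic bound $\|E_r\| \leq \exp(r\,\sup_{s,z}\|A_s(z)\|)$ on $[0,t]$, independently of the trajectory of $Z$. Combined with the boundedness of $c_t$ (and of $f_0$, which we may assume or recover through a standard localization argument), this yields an integrable dominating variable for $\sup_{r \leq t}|M_r|$, so that $M$ is a genuine martingale. Taking expectations at $r=0$ and $r=t$ then delivers the claimed formula.
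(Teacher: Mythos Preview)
Your argument is correct and is essentially the same as the paper's: both consider the time-inhomogeneous Markov process $(Z_r,E_r)$, differentiate $E_r f_{t-r}(Z_r)$ in $r$ (you via Dynkin/martingale language, the paper via the Kolmogorov forward equation $\partial_r \mathbb E[h(r,Z_r,E_r)] = \mathbb E[(\partial_r+\mathcal L_r)h]$), and use the PDE to reduce the drift to $-E_r c_{t-r}(Z_r)$ before integrating. Your treatment is in fact slightly more careful than the paper's, since you explicitly address the local-to-true martingale upgrade via the Grönwall bound on $\|E_r\|$, which the paper leaves implicit.
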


\begin{proof}
 Fix $t>0$, $z\in\mathcal S'$. Then $(Z_r,E_r)_{r\geqslant0}$ given in the statement of the lemma is a time-inhomogeneous Markov process with generator 
  \[\mathcal L_r g(z,e) = e A_{t-r}(z)\cdot \na_e g(z,e) + L g(z,e), \]
  and thus, considering the vector-valued function $h(r,z,e)=e f_{t-r}(z)$, we end up with
\begin{eqnarray*}
\partial_r \mathbb E \po E_r f_{t-r}(Z_r) \pf &= & \partial_r \mathbb E \po h(r,Z_r,E_r)\pf \\
&= & 
\mathbb E \po  \po \partial_r + \mathcal L_r\pf  h(r,Z_r,E_r)   \pf \\
&= & 
\mathbb E \po E_s \po - \partial_r + A_{t-r}  + L \pf  f_{t-r}(Z_r)   \pf \\
&= & 
-\mathbb E \po E_r c_{t-r}(Z_r)   \pf .
\end{eqnarray*}
Using that $E_0=I$,  conclusion follows from
\[ f_t(z) = \mathbb E \po E_0 f_t(Z_0)\pf = \mathbb E \po E_t f_0(Z_t)\pf - \int_0^t \partial_r \mathbb E \po E_r f_{t-r}(Z_r) \pf \dd r\,.  \]
\end{proof}

\subsection{Proof of Lemma~\ref{lem:uniqueIPM}}
We can write $\sigma^\varepsilon(t) = \sigma(t/\varepsilon)$, and then drop the superscript $\varepsilon$ to alleviate notations; in other words, without loss of generality it is sufficient to treat the case $\varepsilon=1$ (up to replacing $\gamma$ by $\gamma/\varepsilon$ in Assumption~\ref{assu:settings}).

For $s \in \mathcal{S}$, let $F_s(x) = x(a_{10}^s - a_{11}^s x)$. By definition of $p_0$ and $p_1$, we have that for all $s\in \mathcal{S}$, $F_s(x) > 0$ for $x \in (0,p_0)$ and $F_s(x) < 0$ for $x > p_1$, which implies that for all $X_0 \neq 0$, there exists a time $T_0$ such that, for all $t \geq T_0$, $X_t \in [p_0, p_1]$. Now, on the state space $E= [p_0, p_1] \times \mathcal{S}$, we introduce the metric
\[
d((x,s),(y,s')) = 1_{ s \neq s'} + 1_{s = s'} \frac{| x - y|}{p_1 -p_0},
\]
and we consider on the space of probability measures on $E$ the associated Wasserstein distance defined for all $\mu, \nu$ by 
\[
\mathcal{W}_d ( \mu, \nu) = \inf_{\Gamma} \int d((x,s), (y,s')) d\,\Gamma( (x,s), (x,s')),
\]
where the infimum runs overs all the coupling measures of $\mu$ and $\nu$. We will show that there exists $C, \delta > 0$ such that, for all $(x,s), (y,s') \in E$ and all $t \geq 0$,
\begin{equation}
\label{eq:cvWasserstein}
\mathcal{W}_d ( \delta_{(x,s)} P_t, \delta_{(y,s')} P_t ) \leq C e^{ - \delta t} d( (x,s), (y,s')),
\end{equation}
where $P_t$ stands for $P_t^{1}$, the semigroup of $(X, \sigma)$ at  time $t$. The above inequality implies Lemma~\ref{lem:uniqueIPM} by classical arguments, since the space of probability measures on $E$ endowed with the Wasserstein metric $\mathcal{W}_d$ is complete.

We first prove~\eqref{eq:cvWasserstein} for point $(x,s)$ and $(y,s')$ such that $s'=s$. In that case, a coupling of $\delta_{(x,s)} P_t$ and $\delta_{(y,s)} P_t$ is given by the random variables $((X_t^{x,s}, \sigma^s_t),(X_t^{y,s}, \sigma^s_t))$, where for all $T \geq 0$,
\[
X_T^{x,s} = x + \int_0^T F_{\sigma^s(u)}(X_u^{x,s}) du, \quad  X_T^{x,s} = y + \int_0^T F_{\sigma^s(u)}(X_u^{y,s}) du.
\]
In other words,  $X_t^{x,s}$ and $X_t^{y,s}$ denote the solution to~\eqref{eq:Lotka1} with respective initial conditions $x$ and $y$ and constructed with the same realisation of the processus $(\sigma(u)_{u \geq 0}$ starting from $s$.

\begin{lem}
Almost surely,  for all $x,y \in [p_0, p_1]$, $s \in \mathcal{S}$ and $t\geqslant0$,
\begin{equation}
\label{eq:couplagePS}
    | X_t^{x,s} - X_t^{y,s} | \leq \frac{p_1}{p_0} e^{ - \eta t} | x - y|,
\end{equation}
where $\eta = p_0 \inf_{s \in \mathcal{S}}a_{11}^s > 0$.  This immediately yields
\[
\mathcal{W}_d ( \delta_{(x,s)} P_t^{\varepsilon}, \delta_{(y,s)} P_t^{\varepsilon} ) \leq \frac{p_1}{p_0} e^{ - \eta t}  d( (x,s), (y,s)).
\]
\end{lem}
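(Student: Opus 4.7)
The plan is to reduce to a linear differential inequality for the log-ratio $L_t := \ln(X_t^{y,s}/X_t^{x,s})$, using the specific (logistic) form of $F_s$. Assume without loss of generality that $x \leq y$; by the scalar comparison principle applied pathwise for each realization of $\sigma$, we then have $X_t^{x,s} \leq X_t^{y,s}$ for all $t \geq 0$. Moreover, by definition of $p_0, p_1$ one has $F_s(p_0) \geq 0$ and $F_s(p_1) \leq 0$ for every $s \in \mathcal{S}$, so the interval $[p_0, p_1]$ is positively invariant under each vector field $F_s$; consequently $X_t^{x,s}, X_t^{y,s} \in [p_0, p_1]$ for all $t \geq 0$.

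Next, from $\frac{d}{dt}\ln X_t = a_{10}^{\sigma(t)} - a_{11}^{\sigma(t)} X_t$ we obtain
\[
\frac{d L_t}{dt} \;=\; -\, a_{11}^{\sigma(t)} \bigl( X_t^{y,s} - X_t^{x,s} \bigr).
\]
Writing $X_t^{y,s} - X_t^{x,s} = X_t^{x,s}(e^{L_t} - 1)$ and using the elementary inequality $e^z - 1 \geq z$ for $z \geq 0$ together with $X_t^{x,s} \geq p_0$, we arrive at
\[
\frac{d L_t}{dt} \;\leq\; -\, a_{11}^{\sigma(t)}\, p_0\, L_t \;\leq\; -\eta\, L_t,
\]
so that by Grönwall, $L_t \leq L_0\, e^{-\eta t}$ for all $t \geq 0$.

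To convert this back into a bound on $X_t^{y,s} - X_t^{x,s}$, I would use the opposite elementary bounds. On the one hand, $\ln(1+z) \geq z/(1+z)$ for $z \geq 0$ applied to $z = (X_t^{y,s} - X_t^{x,s})/X_t^{x,s}$ gives $L_t \geq (X_t^{y,s} - X_t^{x,s})/X_t^{y,s} \geq (X_t^{y,s} - X_t^{x,s})/p_1$. On the other hand, $\ln(1+z) \leq z$ applied to $z = (y-x)/x$ gives $L_0 \leq (y-x)/x \leq (y-x)/p_0$. Combining these yields
\[
X_t^{y,s} - X_t^{x,s} \;\leq\; p_1\, L_t \;\leq\; p_1\, L_0\, e^{-\eta t} \;\leq\; \frac{p_1}{p_0}(y-x)\, e^{-\eta t},
\]
which is the claimed estimate. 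There is no serious obstacle here; the only point requiring a little care is to match the constant $p_1/p_0$ exactly, which is achieved by using $e^z - 1 \geq z$ to derive the differential inequality and the complementary logarithmic bounds to translate it back.
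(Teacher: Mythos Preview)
Your proof is correct and follows essentially the same route as the paper: both pass to logarithmic coordinates, obtain an exponential contraction there with rate $\eta = p_0\inf_s a_{11}^s$, and convert back using the Lipschitz constants $p_1$ and $1/p_0$ of $\exp$ and $\ln$ on $[\ln p_0,\ln p_1]$ and $[p_0,p_1]$. The only cosmetic difference is that the paper computes the variational equation $\partial_z\varphi_t(z)=\exp\bigl(-\int_0^t\beta_u e^{\varphi_u(z)}du\bigr)$ for the flow of $\dot z=\alpha_t-\beta_t e^z$ and applies the mean value theorem, whereas you work directly with the log-ratio $L_t$ and derive a linear differential inequality via $e^z-1\geq z$; the two arguments are equivalent and yield the identical constant.
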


\begin{proof}
Let $s \in \mathcal{S}$ and write $\alpha_t = a^{\sigma^s(t)}_{10}$ and $\beta_t = a^{\sigma^s(t)}_{11}$. Let $\varphi_t(z)$ the solution of the differential equation
\begin{equation}
\label{eq:varlog}    
\dot z_t = \alpha_t - \beta_t e^{z_t}, \quad z_0=z
\end{equation}
Then, for all $z,z' \geq \ln(p_0)$, and all $t \geq 0$, we have
\begin{equation}
\label{inq:varlog}    
|\varphi_t(z) - \varphi_t(z')|\leq e^{ - p_0 \int_0^t \beta_u du} |z - z' |.
\end{equation}
Indeed, the associated variational equation is 
\[
\frac{d}{dt}( \partial_z \varphi_t(z) ) = - \beta _u( \partial_z \varphi_t(z)) e^{ \varphi_t(z)},
\]
thus 
\begin{align*}
    \partial_z \varphi_t(z) & = \exp \left( - \int_0^t \beta_u e^{ \varphi_u(z)} du \right)\\
    & \leq  \exp \left( - p_0 \int_0^t \beta_u  du \right),
\end{align*}
where the inequality comes from the fact that  $z \geq \ln(p_0)$ implies $\varphi_t(z) \geq \ln(p_0)$ for all $t \geq 0$.

Now, consider the differential equation 
\begin{equation}
\label{eq:LValpha}
    \dot x_t = x_t ( \alpha_t - \beta_t x_t), \quad x_0 = x
\end{equation}
and set $z_t = \ln(x_t)$. Then, $z_t$ solves~\eqref{eq:varlog} with initial condition $\ln(x)$. Denoting by $\psi_t(x)$ the solution to~\eqref{eq:LValpha}, it holds true, using~\eqref{inq:varlog}, that for all $x,y \in [p_0, p_1]$ and all $t \geq 0$,
\begin{align*}
    | \psi_t(x) - \psi_t(y) | & = | e^{\varphi_t(\ln(x))} - e^{\varphi_t(\ln(y))}|\\
    & \leq p_1 |\varphi_t(x) - \varphi_t(y)|\\
    & \leq p_1 e^{ - p_0 \int_0^t \beta_u du}| \ln(x) - \ln(y) |\\
    & \leq \frac{p_1}{p_0} e^{ - p_0 \int_0^t \beta_u du}|x - y|.
\end{align*}
This entails the result since $X_t^{x,s}$ and $X_t^{y,s}$  solve~\eqref{eq:LValpha} with respective initial conditions $x$ and $y$.
\end{proof} 
We now tackle the case where the initial conditions of  $\sigma$ in the two chains are distinct $s\neq s' \in\mathcal S$. Let $t>0$. Thanks to Assumption~\ref{assu:settings}, there exists a coupling $(\sigma_{t/2}^s,\sigma_{t/2}^{s'})$ of $\delta_{s} \mathcal Q_{t/2}$ and $\delta_{s'} \mathcal Q_{t/2}$ with
\[\mathbb P \po \sigma_{t/2}^{s} \neq \sigma_{t/2}^{s'}\pf \leqslant C e^{-\gamma t}\,.\]
Fix $x,y\in [p_0,p_1]$, and let $X_{t/2}^{x,s}$ be such that $(\sigma_{t/2}^s,X_{t/2}^{x,s})$ is distributed according to $\delta_{x,s}P_{t/2}^{\varepsilon}$ (which is possible since regular version of the conditional laws exists in Polish spaces, see e.g. \cite[Theorem 5.3]{Kallenberg}), and similarly for $X_{t/2}^{y,s'}$. Then, we define $(\sigma_r^s,X_r^{x,s},\sigma_r^{s'},X_r^{y,s'})_{r\geqslant t/2}$ as follows: under the event $\{\sigma_{t/2}^s = \sigma_{t/2}^{s'}\}$, we let $(\sigma_{r}^s)_{r\geqslant t/2}$ be a Markov process associated to $(\mathcal Q_r)_{r\geqslant 0}$ with initial condition $\sigma_{t/2}^{s}$, and we set $\sigma_{r}^{s'}=\sigma_r^{s}$ for all $r\geqslant t/2$; otherwise, we define these processes as two independent processes associated to $(\mathcal Q_r)_{r\geqslant 0}$ with respective initial conditions $\sigma_{t/2}^s$ and $\sigma_{t/2}^{s'}$. In both cases, we define $(X_r^{x,s},X_r^{y,s'})_{r\geqslant t/2}$ as solutions to
\[
X_r^{x,s} = X_{t/2}^{x,s} + \int_{t/2}^r F_{\sigma^s(u)}( X_u^{x,s}) du,  \quad X_r^{y,s'} = X_{t/2}^{y,s'} + \int_{t/2}^r F_{\sigma^{s'}(u)}( X_u^{y,s'}) du.
\]
Then $(\sigma_t^s,X_t^{x,s},\sigma_t^{s'},X_t^{y,s'})$ is a coupling of $\delta_{(x,s)} P_t^{\varepsilon}$ and $ \delta_{(y,s)} P_t^{\varepsilon}$ with, using \eqref{eq:couplagePS} and that $|X_{t/2}^{x,s} - X_{t/2}^{x,s} | \leqslant p_1-p_0 $ almost surely,  
\begin{eqnarray*}
    \lefteqn{\mathbb{E}\left(d ( (X_t^{x,s}, \sigma_t^s),  (X_t^{y,s'}, \sigma_t^{s'}))\right)}\\
    & = &  \mathbb P \po \sigma_{t/2}^{s} \neq \sigma_{t/2}^{s'}\pf   + \mathbb{E}\left( \frac{| X_t^{x,s} - X_t^{y,s'}|}{p_1-p_0}  1_{\sigma_{t/2}^{s} = \sigma_{t/2}^{s'}}\right)\\
    & \leq &  C e^{ - \gamma t/2} + \frac{p_1}{p_0} e^{- \eta t/2}\\
    & \leq & \po C+ \frac{p_1}{p_0} \pf e^{-(\gamma \wedge \eta) t/2} d\po (x,s),(y,s')\pf\,.
\end{eqnarray*}
As a conclusion, we have obtained that \eqref{eq:cvWasserstein} holds for all $(x,s),(y,s') \in [p_0,p_1]\times \mathcal S$ for some constants $C,\delta>0$, which concludes the proof. 

\subsection*{Acknowledgements}

The research of P. Monmarché is supported by the French ANR grant SWIDIMS (ANR-20-CE40-0022).

\bibliographystyle{plain}
\bibliography{biblio}

\begin{thebibliography}{10}

\bibitem{RandomSplitting}
A.~{Agazzi}, J.~C. {Mattingly}, and O.~{Melikechi}.
\newblock {Random Splitting of Fluid Models: Unique Ergodicity and
  Convergence}.
\newblock {\em Communications in Mathematical Physics}, March 2023.

\bibitem{arnold2006lyapunov}
Ludwig Arnold, Hans Crauel, and Jean-Pierre Eckmann.
\newblock {\em Lyapunov exponents: proceedings of a conference held in
  Oberwolfach, May 28-June 2, 1990}.
\newblock Springer, 2006.

\bibitem{BIG}
A.~Ben-Israel and T.~N.~E. Greville.
\newblock {\em Generalized Inverses: Theory and Applications}.
\newblock CMS Books in Mathematics. Springer, 2 edition, 2003.

\bibitem{B18}
M.~Bena{\"\i}m.
\newblock Stochastic persistence.
\newblock {\em arXiv preprint arXiv:1806.08450}, 2018.

\bibitem{Benaim2014Stability}
M.~Bena\"{\i}m, S.~Le~Borgne, F.~Malrieu, and P.-A. Zitt.
\newblock On the stability of planar randomly switched systems.
\newblock {\em Ann. Appl. Probab.}, 24(1):292--311, 2014.

\bibitem{BL16}
M.~Bena{\"\i}m and C.~Lobry.
\newblock Lotka {V}olterra in fluctuating environment or ``how switching
  between beneficial environments can make survival harder''.
\newblock {\em Ann. Appl. Probab.}, 26(6):3754--3785, 2016.

\bibitem{NoteTopLyapunov}
M.~{Bena{\"\i}m}, C.~{Lobry}, T.~{Sari}, and E.~{Strickler}.
\newblock {A note on the top Lyapunov exponent of linear cooperative systems}.
\newblock {\em arXiv e-prints}, page arXiv:2302.05874, February 2023.

\bibitem{benaim2019}
M.~Benaïm and E.~Strickler.
\newblock Random switching between vector fields having a common zero.
\newblock {\em Ann. Appl. Probab.}, 29(1):326--375, 02 2019.

\bibitem{chitour2021}
Y.~{Chitour}, G.~{Mazanti}, P.~{Monmarch{\'e}}, and M.~{Sigalotti}.
\newblock {On the gap between deterministic and probabilistic Lyapunov
  exponents for continuous-time linear systems}.
\newblock {\em arXiv e-prints}, page arXiv:2112.07005, December 2021.

\bibitem{chueshov}
I.~Chueshov.
\newblock {\em Monotone random systems theory and applications}, volume 1779 of
  {\em Lecture Notes in Mathematics}.
\newblock Springer-Verlag, Berlin, 2002.

\bibitem{Davis}
M.~H.~A. Davis.
\newblock Piecewise-deterministic markov processes: A general class of
  non-diffusion stochastic models.
\newblock {\em Journal of the Royal Statistical Society. Series B
  (Methodological)}, 46(3):353--388, 1984.

\bibitem{finance}
R.~J. Elliott and T.~K. Siu.
\newblock On markov‐modulated exponential‐affine bond price formulae.
\newblock {\em Applied Mathematical Finance}, 16(1):1--15, 2009.

\bibitem{FGR09}
Alessandra Faggionato, Davide Gabrielli, and M~Ribezzi~Crivellari.
\newblock Non-equilibrium thermodynamics of piecewise deterministic markov
  processes.
\newblock {\em Journal of Statistical Physics}, 137:259--304, 2009.

\bibitem{onpls}
L.~Fainshil, M.~Margaliot, and P.~Chigansky.
\newblock On the stability of positive linear switched systems under arbitrary
  switching laws.
\newblock {\em IEEE Trans. Automat. Control}, 54(4):897--899, 2009.

\bibitem{goddard2023study}
BD~Goddard, M~Ottobre, KJ~Painter, and I~Souttar.
\newblock On the study of slow-fast dynamics, when the fast process has
  multiple invariant measures.
\newblock {\em arXiv preprint arXiv:2305.04632}, 2023.

\bibitem{gurvits07}
L.~Gurvits, R.~Shorten, and O.~Mason.
\newblock On the stability of switched positive linear systems.
\newblock {\em IEEE Trans. Automat. Control}, 52(6):1099--1103, 2007.

\bibitem{hatzikirou2021novel}
Haralampos Hatzikirou, Nikos~I Kavallaris, and Marta Leocata.
\newblock A novel averaging principle provides insights in the impact of
  intratumoral heterogeneity on tumor progression.
\newblock {\em Mathematics}, 9(20):2530, 2021.

\bibitem{Kallenberg}
O.~Kallenberg.
\newblock {\em Foundations of modern probability}.
\newblock Probability and its Applications (New York). Springer-Verlag, New
  York, second edition, 2002.

\bibitem{fiabilite}
JEFFREY~P. KHAROUFEH and STEVEN~M. COX.
\newblock Stochastic models for degradation-based reliability.
\newblock {\em IIE Transactions}, 37(6):533--542, 2005.

\bibitem{LMR14}
S.~D. Lawley, J.~C. Mattingly, and M.l~C. Reed.
\newblock Sensitivity to switching rates in stochastically switched {ODE}s.
\newblock {\em Commun. Math. Sci.}, 12(7):1343--1352, 2014.

\bibitem{MZ17}
F.~Malrieu and P.-A. Zitt.
\newblock On the persistence regime for {L}otka-{V}olterra in randomly
  fluctuating environments.
\newblock {\em ALEA Lat. Am. J. Probab. Math. Stat.}, 14(2):733--749, 2017.

\bibitem{PTW2012}
K.~Pakdaman, M.~Thieullen, and G.~Wainrib.
\newblock Asymptotic expansion and central limit theorem for multiscale
  piecewise-deterministic markov processes.
\newblock {\em Stochastic Processes and their Applications}, 122(6):2292--2318,
  2012.

\bibitem{TalayTubaro}
D.~Talay and L.~Tubaro.
\newblock Expansion of the global error for numerical schemes solving
  stochastic differential equations.
\newblock {\em Stochastic Analysis and Applications}, 8(4):483--509, 1990.

\end{thebibliography}
\end{document}